\author{Simone Virili}
\address{Universitat Aut\`onoma de Barcelona, Departament de Matem\`atiques,
Edifici C, Facultat de Ci\`encies
- 08193 - Bellaterra (Barcelona), Spain.}
\email{simone.virili@uab.cat \text{\rm or} virili.simone@gmail.com}
\def\K{\mathbb F}
\def\Mat{\mathrm{Mat}}
\def\N{\mathbb N}
\def\C{\mathcal C}
\def\Z{\mathbb Z}
\def\End{\mathrm{End}}
\def\lmod#1{#1\text{-}{\mathrm{Mod}}}
\def\id{\mathrm{id}}
\def\Im{\mathrm{Im}}
\def\G{\mathcal{G}}
\def\T{\mathcal{T}}
\def\F{\mathcal{F}}
\def\S{\mathbf{S}}
\def\tor{\mathbf{T}}
\def\Q{\mathbf{Q}}
\def\Fun{\mathrm{Fun}}
\def\Aut{\mathrm{Aut}}
\def\bbone{\mathbf{1}}
\def\res{\downarrow}
\def\ext{\uparrow}
\def\Hom{\mathrm{Hom}}
\def\L{\mathcal L}
\theoremstyle{plain}
 \newtheorem{theorem}{Theorem}[section]
 \newtheorem{lemma}[theorem]{Lemma}
 \newtheorem{corollary}[theorem]{Corollary}
 \newtheorem{proposition}[theorem]{Proposition}
\theoremstyle{definition}
 \newtheorem{example}[theorem]{Example}
\begin{document}


\title{Stable finiteness of  endomorphism rings}


\keywords{Directly finite, stably finite, sofic, Kaplansky's conjecture, Grothendieck category.}
\subjclass[2020]{Primary 20C07, 18E15; secondary 16D10, 20C99, 18E35.}




\maketitle

\begin{abstract}
We combine a combinatorial idea of Benjamin Weiss and some localization theory of Grothendieck categories to give a short and completely self-contained proof of the following recent result of Hanfeng Li and Bingbing Liang: Given a left Noetherian ring $R$ and a sofic group $G$, the group-ring $R[G]$ is stably finite.
\end{abstract}


\section{Introduction}

A function between two sets is said to be {\em surjunctive} if it is non-injective or surjective. 
Let now $G$ be a group, $A$ a finite set and equip the product $A^G$ with the product of the discrete topologies on each copy of $A$ (so that $A^G$ is a compact space with a continuous $G$-action). A continuous and $G$-equivariant map $\phi\colon A^G\to A^G$ is also called a {\em cellular automaton} (on the finite alphabet $A$). A long standing open problem by Gottschalk \cite{Gott} is that of determining whether or not all cellular automata on finite alphabets are surjunctive; we refer to this problem as the {\em Surjunctivity Conjecture}. When $G$ is amenable this problem has been known for a long time to have a positive solution, but it was just in 1999 when Gromov \cite{Gro} came out with a general theorem solving the problem in the positive for the large class of sofic groups (see also \cite{Weiss}). The general case remains open.
%
%

\medskip
A related (or, actually, dual) problem is that of the stable finiteness of group rings (see \cite{Ceccherini} for connections between surjunctivity, ``linear surjunctivity'' and stable finiteness).
Indeed, a ring $R$ is {\em directly finite} if $xy=1$ implies $yx=1$ for all $x,y\in R$. Furthermore, $R$ is {\em stably finite} if the ring of square $k\times k$ matrices $\Mat_k(R)$ is directly finite for all $k\in \N_{\geq1}$. In other words, a ring $R$ is stably finite if, for each positive integer $k$, an endomorphism $\phi\colon R^k\to R^k$ is (split) surjective if and only if it is bijective.
A long standing open problem due to Kaplansky \cite{Kap} is to determine whether the group ring $\K[G]$ is stably finite for any field $\K$, we refer to this problem as the {\em Stable Finiteness Conjecture}.
%
In case the field $\K$  has characteristic $0$, then the problem was solved in the positive by Kaplansky. There was no progress in the positive characteristic case until the year 2002, when Ara, O'Meara and Perera \cite{Ara} proved that a group algebra $D[G]$ is stably finite whenever $G$ is residually amenable and $D$ is any division ring. This last result was generalized by Elek and Szab\'o \cite{Elek}, that proved the same result for $G$ a sofic group (see also \cite{Ceccherini} and \cite{Goulnara_L} for alternative proofs).

\medskip
Recently, the author \cite{Virili} proved that $R[G]$ is stably finite whenever $R$ is left Noetherian and $G$ is amenable using a notion of ``algebraic entropy'' for modules over $R[G]$. Soon after, Li and Liang \cite{Li-Liang} were able to extend this entropy theory to modules over $R[G]$ with $G$ sofic and, with this new invariant at hand, they could use similar ideas to \cite{Virili} to verify that $R[G]$ is stably finite whenever $R$ is left Noetherian and $G$ sofic. 
In this short note, we show that the main ideas of \cite{Weiss}, combined with some localization theory for Grothendieck categories, can be used to give a direct and completely self-contained proof of this last result. In fact, we obtain an even more general statement for representations of a sofic group $G$ on a Grothendieck category $\G$ (see Corollary \ref{gen_thm_groth3}). The announced result then follows just specializing to the case when $\G\cong R\text{-}\mathrm{Mod}$ for a left Noetherian ring $R$ (see Corollary \ref{gen_thm_groth4}).

\section{The main ingredients of the proof}

In this section we recall some basic definitions and facts that will be needed for the proofs of the main results. In particular, 
we start fixing some notations for labeled directed graphs and we recall a combinatorial result proved by Weiss (see Lemma~\ref{tech_weiss}): this simple-looking result is key for the proofs in Section~\ref{Sec_3}. After this, we introduce Cayley graphs and we use these to define the class of sofic groups. In the second part of the section, we focus instead on Grothendieck categories, on their localizations, and on the so-called Gabriel filtration of a Grothendieck category. Finally, in the third and last part of the section, we concentrate on the category of actions of a group on the objects of a given Grothendieck category.  

\subsection{Cayley Graphs and sofic groups}
A {\em labeled directed graph (digraph)} $\Gamma$ is a pair $(V,E)$, where $V$ is a set of {\em vertices} and $E\subseteq V\times V$ is a set of ordered pairs of vertices, called {\em edges}, together with a map
$\phi\colon E\to B$, from the set of edges to a set of {\em labelings} $B$. If we need to make explicit the set of labelings $B$ we say that $\Gamma$ is a $B$-labeled digraph.

Let $B$ be a set of labelings and consider two $B$-labeled digraphs $\Gamma_1=(V_1,E_1)$ and $\Gamma_2=(V_2,E_2)$, with labelings defined by $\phi_1\colon E_1\to B$ and $\phi_2\colon E_2\to B$, respectively. A {\em morphism of $B$-labeled digraphs} $\alpha\colon \Gamma_1\to \Gamma_2$ is a map $\alpha\colon V_1\to V_2$ that sends edges to edges (i.e., $(v,v')\in E_1$ implies $(\alpha(v),\alpha(v'))\in E_2$, for all $v,\,v'\in V_1$) and it respects the labels (i.e., $\phi_1(v,v')=\phi_2(\alpha(v),\alpha(v'))$, for all $(v,v')\in E_1$). In particular, such an $\alpha$ is an {\em isomorphism} if it is bijective and its inverse also sends edges to edges or, equivalently,  ($(\alpha(v),\alpha(v'))\in E_2$ implies $(v,v')\in E_1$, for all $v,\,v'\in V_1$).

Our main example of a labeled digraph is the following: given a finitely generated group $G$, fix a finite symmetric set of generators $B$ of $G$. The {\em Cayley graph} $\Gamma(G,B)$ of $G$ with respect to $B$ is a $B$-labeled digraph $\Gamma(G,B)=(V,E)$ such that the set of vertices $V$ coincides with $G$ and the edges are the pairs of the form $(g, gb)$, for $g\in G$ and $b\in B$; such an edge is labeled by $b$.

Given a (labeled) digraph $\Gamma=(V,E)$ we define the distance between vertices
\[
d\colon V\times V\longrightarrow \N\cup\{\infty\},
\]
where $d(v,v')$ is the number of edges in a shortest directed path between $v$ and $v'$, if some directed path exists; and $d(v,v')=\infty$ otherwise. For all $v\in V$ and $n\in\N$ we let 
\[
N_n(v):=\{v'\in V:d(v,v')\leq n\}
\] 
be the {\em $n$-th neighborhood of $v$ in $\Gamma$}.  In the case of a Cayley graph $\Gamma(G,B)$, the distance function is denoted by $d_B(-,-)$. For all $g\in G$ and $n\in \N$, we denote by $N_n(B,g)$ {\em $n$-th neighborhood of $g$ in $\Gamma(G,B)$}. If $g=1$ we usually denote $N_n(B,1)$ simply by $N_n(B)$. 

\begin{example}
Let $G=\Z$ be the additive group of integers, with the set of generators $B=\{-1,0,1\}$. Then $N_n(B)=\{-n,\dots,-1,0,1,\dots,n\}$. More generally, if we take 
$G=\Z^k=\Z e_1\times \dots\times \Z e_k$ for some positive integer $k$, the canonical choice for the set of generators is $B=\{-e_1,\dots,-e_k,0,e_1,\dots, e_k\}$. Thus we have 
\begin{equation}\label{palle_Z_k}N_n(B)=\left\{\sum_{i=1}^k\lambda_ie_i:\lambda_i\in \{-n,\dots,0,\dots,n\}\text{ and }\sum_{i=1}^k|\lambda_i|\leq n\right\}\, .\end{equation}
\end{example}

The following combinatorial lemma (which is Lemma 3.1 in \cite{Weiss}) will be extremely important later on.

\begin{lemma}\label{tech_weiss}
Let $G$ be a finitely generated group, $B$ a finite symmetric set of generators of $G$ and $\Gamma(G,B)$ its Cayley graph. If $\Lambda=(V, E)$ is a finite $B$-labeled digraph and, for at least half the vertices of $\Lambda$, their $2r_0+1$-neighborhoods in $\Lambda$ are isomorphic to $N_{2r_0+1}(B)$ (the $2r_0+1$-neighborhood of $1$ in $\Gamma(G,B)$), then there is a subset $V_1$ of these vertices such that
\begin{enumerate}[\rm (1)]
\item $|V_1|/|V| \geq 1/2|N_{2r_0+1}(B)|$; 
\item the minimal distance between any two vertices in $V_1$ is at least $2r_{0} + 1$.
\end{enumerate}
\end{lemma}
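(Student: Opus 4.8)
The plan is to run a greedy packing argument inside the set of ``good'' vertices. Write $W\subseteq V$ for the set of all $v\in V$ whose $(2r_0+1)$-neighborhood $N_{2r_0+1}(v)$ in $\Lambda$ is isomorphic, as a $B$-labeled digraph, to $N_{2r_0+1}(B)$; by hypothesis $|W|\ge|V|/2$. First I would choose $V_1\subseteq W$ maximal with respect to inclusion among all subsets $S\subseteq W$ satisfying $d(u,v)\ge 2r_0+1$ and $d(v,u)\ge 2r_0+1$ for every pair of distinct $u,v\in S$ (distances measured in $\Lambda$); such a maximal $S$ exists since $\Lambda$ is finite. Then condition (2) holds by construction, so it remains only to prove the size estimate (1).

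The next ingredient is the elementary observation that balls around good vertices are small: for $v\in V_1\subseteq W$ one has $N_{2r_0}(v)\subseteq N_{2r_0+1}(v)$ and $|N_{2r_0+1}(v)|=|N_{2r_0+1}(B)|$ (the two being isomorphic), so $|N_{2r_0}(v)|\le|N_{2r_0+1}(B)|$. Hence it suffices to show that the out-balls $N_{2r_0}(v)$, $v\in V_1$, cover $W$: this gives $|W|\le\sum_{v\in V_1}|N_{2r_0}(v)|\le|V_1|\cdot|N_{2r_0+1}(B)|$, and therefore $|V_1|\ge|W|/|N_{2r_0+1}(B)|\ge|V|/\bigl(2|N_{2r_0+1}(B)|\bigr)$, which is (1).

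It remains to establish the covering, and this is where I expect the only genuine (if minor) difficulty. Fix $w\in W$. If $w\in V_1$ there is nothing to prove; otherwise maximality of $V_1$ forces some $v\in V_1$ with $d(v,w)\le 2r_0$ or $d(w,v)\le 2r_0$. In the first case $w\in N_{2r_0}(v)$ at once. In the second case a short path from $w$ to $v$ need not a priori yield a short path back, because $\Lambda$ is directed; the remedy is to use that $w$ itself is good. A shortest path $w=u_0\to u_1\to\cdots\to u_m=v$ has length $m\le 2r_0$ and satisfies $d(w,u_j)=j$ for all $j$ (being geodesic), so all the $u_j$ lie in $N_m(w)\subseteq N_{2r_0+1}(w)$ and the path can be transported along an isomorphism $\psi\colon N_{2r_0+1}(w)\xrightarrow{\cong}N_{2r_0+1}(B)$ with $\psi(w)=1$. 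Since an out-edge in a Cayley graph is uniquely determined by its label, the transported path must be $1,c_1,c_1c_2,\dots,c_1\cdots c_m=\psi(v)$, where $c_1,\dots,c_m\in B$ are the labels read along $w\to\cdots\to v$; as $B$ is symmetric, reversing this word gives a directed path $\psi(v)\to c_1\cdots c_{m-1}\to\cdots\to 1$ whose vertices still all lie in $N_{2r_0+1}(B)$, and this pulls back through $\psi^{-1}$ to a directed path of length $m$ from $v$ to $w$ inside $N_{2r_0+1}(w)$. Hence $d(v,w)\le m\le 2r_0$, i.e.\ $w\in N_{2r_0}(v)$, so $W\subseteq\bigcup_{v\in V_1}N_{2r_0}(v)$ and we are done. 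The one thing to keep straight throughout is which local model one is working in (the ball around $w$ versus the ball around $v$), together with the fact that each $N_{2r_0+1}(B)$ is a symmetric digraph whose labels determine edges uniquely in both directions.
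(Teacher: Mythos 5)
Your proof is correct, but note that the paper itself gives no proof of this lemma: it is quoted verbatim from Weiss (Lemma 3.1 of \cite{Weiss}), so there is nothing internal to compare against. Your maximal-packing argument (take $V_1\subseteq W$ maximal with pairwise directed distances $\geq 2r_0+1$, then show the $2r_0$-out-balls of $V_1$ cover the good set $W$, each such ball having at most $|N_{2r_0+1}(B)|$ elements) is essentially the standard greedy argument behind Weiss's statement, and you correctly identify and resolve the one real issue, namely that $\Lambda$ is directed: a short path $w\to\cdots\to v$ is reversed by reading its label word backwards inside the Cayley-ball model of $N_{2r_0+1}(w)$, using the symmetry of $B$, which is exactly what makes the covering claim work. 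One small inaccuracy: you assert that the isomorphism $\psi\colon N_{2r_0+1}(w)\to N_{2r_0+1}(B)$ can be taken with $\psi(w)=1$. That normalization is not justified by the hypothesis and can genuinely fail (label-preserving isomorphisms need not fix the center; e.g.\ when $G$ is finite and the ball is the whole Cayley graph, left translations move $1$). Fortunately your argument never uses it: with $\psi(w)=g$ arbitrary, the transported path is $g, gc_1,\dots,gc_1\cdots c_m$, the reversed edges labeled $c_j^{-1}\in B$ still have both endpoints in the image $N_{2r_0+1}(B)$ and hence lie in the induced subgraph, and they pull back through $\psi^{-1}$ exactly as before. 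So the blemish is cosmetic, not a gap.
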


Let us conclude this subsection with the definition of sofic group. 
%
Indeed, a finitely generated group $G$ is {\em sofic} if, for each finite symmetric set of generators $B$ of $G$, $\varepsilon \in (0,1)$ and $r \in \N$, there is a finite $B$-labeled digraph $\Gamma=(V,E)$, which has a finite subset of its vertices $V_0 \subseteq V$ satisfying:
\begin{enumerate}[\rm (1)]
\item $|V_0| \geq (1 -\varepsilon )|V|$;
\item for each $v \in V_0$, the $r$-neighborhood $N_r(v)$ of $v$ in $\Gamma$ is isomorphic to $N_r(B)$.
\end{enumerate}
An arbitrary group $G$ is  {\em sofic} if each of its finitely generated subgroups is sofic.

\medskip
It is an easy exercise to prove that each finite and each Abelian group is sofic. In fact, one can even prove that the class of sofic groups contains both the class of amenable and the class of residually finite (or, more generally, that of residually amenable) groups. It is still not known whether any group is sofic (for more details see \cite{Weiss} and \cite{Elek2}).

\subsection{Grothendieck categories, localization and Gabriel dimension}

An Abelian category $\G$ is said to be a {\em Grothendieck} category if it is cocomplete, it has a generator, and directed colimits are exact in $\G$. One can prove that Grothendieck categories are also complete and that they have injective envelopes.

%

\medskip
A {\em torsion theory} in a Grothendieck category $\G$ is a pair $\tau=(\T, \F)$ of full subcategories of $\G$ such that 
\begin{itemize}
\item $\T\subseteq \G$ is closed under taking quotients, extensions and coproducts;
\item $\Hom_\G(T,F)=0$, for each $T\in \T$ and $F\in \F$;
\item for each $X\in \G$, there are $T_X\in \T$, $F_X\in \F$ and a short exact sequence:
\[
0\to T_X\to X\to F_X\to 0.
\]
\end{itemize}
A torsion theory $\tau=(\T,\F)$ is said to be {\em hereditary} provided $\T$ is closed under taking subobjects.
Given a torsion theory $\tau=(\T,\F)$ in a Grothendieck category $\G$, the inclusion $\T\to \G$ has a right adjoint $\tor_\tau\colon \G\to \T$, which is a subfunctor of the identity and it is  called the {\em torsion functor}. For each $X\in \G$, we have that $X/\tor_\tau X\in \F$.

\medskip
Recall now that a {\em Giraud subcategory} of a Grothendieck category $\G$, is a reflective subcategory $\C\subseteq \G$ such that the inclusion functor $\S\colon \C\to \G$ has an exact left adjoint $\Q\colon \G\to \C$. To such a Giraud subcategory, one can associate a unique hereditary torsion theory, whose torsion class is:
\[
\ker(\Q):=\{X\in \G:\Q(X)=0\}.
\]
%
%
%
%
On the other hand, given a hereditary torsion theory $\tau=(\T,\F)$ one can construct a Giraud subcategory of $\G$ as follows. We say that an object $X\in \G$ is {\em $\tau$-local} if both $X\in\F$ and $E(X)/X\in \F$. The full subcategory of $\G$ of all the $\tau$-local objects is denoted by $\G/\T$ and the inclusion functor by $\S_{\tau}\colon \G/\T\to \G$; one can give an explicit construction of the left adjoint $\Q_\tau\colon \G\to \G/\T$ (called the {\em $\tau$-quotient functor}) to $\S_\tau$, and prove that this functor is exact. Let us remark that we adopt the notation ``$\G/\T$'' because this category is equivalent to the Gabriel quotient of $\G$ over $\T$ (for more details on these constructions see \cite{sten}, \cite{gabriel} or \cite{popescu}). 
%
%


\medskip
Let us now recall that the {\em Gabriel filtration} of a Grothendieck category $\G$ is a transfinite chain 
$0=\G_{-1}\subseteq \G_0\subseteq \dots \subseteq \G_\alpha\subseteq \dots$
of hereditary torsion classes, where:
\begin{itemize}
\item $\G_{-1}:=0$;
\item suppose that $\alpha$ is an ordinal for which $\G_{\alpha}$ has already been defined. An object $X\in \G$ is said to be {\em $\alpha$-cocritical} if $X$ is $\tau_\alpha$-torsion free and every proper quotient of $X$ is $\tau_\alpha$-torsion, where $\tau_{\alpha}$ is the unique torsion theory whose torsion class is $\G_{{\alpha}}$. 
We let $\G_{\alpha+1}$ be the smallest hereditary torsion class containing $\G_{{\alpha}}$ and all the ${\alpha}$-cocritical objects;
\item if $\lambda$ is a limit ordinal, $\G_\lambda$ is the smallest hereditary torsion class containing $\bigcup_{\alpha<\lambda}\G_\alpha$.
\end{itemize}
For any ordinal $\alpha$, we let $\tor_\alpha\colon \G\to \G_{\alpha}$ and $\Q_\alpha\colon \G\to\G/\G_{\alpha}$ be the $\tau_\alpha$-torsion and the $\tau_\alpha$-quotient functor, respectively. Note that the ascending chain of subcategories $\G_\alpha\subseteq \G$ eventually stabilizes (one way to see this is to note that $\G_\alpha$ is generated by the $\tau_\alpha$-torsion quotients of a fixed generator $G$ of $\G$). Hence, it makes sense to consider $\widetilde\G:=\bigcup_{\alpha}\G_\alpha$. Let us remark that, even for $\G=\lmod R$ a category of modules, it may happen that $\widetilde \G\neq \G$ (e.g., take $R$ to be a non-discrete valuation domain). 

\begin{lemma}\label{detect_trivial_lemma}
Let $\G$ be a Grothendieck category, and $X\in \widetilde\G$. Then, $X=0$ if, and only if, $\Q_\alpha(\tor_{\alpha+1}(X))=0$, for all $\alpha$.
\end{lemma}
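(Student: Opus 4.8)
The statement is an iff; the forward direction is trivial since all functors are additive. For the converse, the plan is to argue by contradiction using the Gabriel filtration $0=\G_{-1}\subseteq\G_0\subseteq\dots$ and the fact that $X\in\widetilde\G$, i.e. $X\in\G_\beta$ for some ordinal $\beta$. Assume $X\neq 0$ but $\Q_\alpha(\tor_{\alpha+1}(X))=0$ for all $\alpha$; I want to derive $X\in\G_{-1}=0$, a contradiction. The key observation is the following descent step: if $X\in\G_{\alpha+1}$ and $\Q_\alpha(\tor_{\alpha+1}(X))=0$, then in fact $X\in\G_\alpha$. Indeed, $\tor_{\alpha+1}(X)=X$ because $X\in\G_{\alpha+1}$, so the hypothesis says $\Q_\alpha(X)=0$, which by definition of the quotient functor means exactly $X\in\ker(\Q_\alpha)=\G_\alpha$.

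First I would make precise the structure of the induction. Let $\beta$ be minimal with $X\in\G_\beta$. If $\beta=-1$ we are done. If $\beta=\alpha+1$ is a successor, the descent step above gives $X\in\G_\alpha$, contradicting minimality of $\beta$. If $\beta=\lambda$ is a limit ordinal, I need to know that $\G_\lambda=\bigcup_{\alpha<\lambda}\G_\alpha$ — but a priori $\G_\lambda$ is only defined as the \emph{smallest hereditary torsion class containing} $\bigcup_{\alpha<\lambda}\G_\alpha$, which could in principle be larger. So the limit-ordinal case is where the one genuinely nontrivial point lies: I must show that an increasing union of hereditary torsion classes is already a hereditary torsion class, so that no closure is needed and $X\in\G_\lambda$ forces $X\in\G_\alpha$ for some $\alpha<\lambda$.

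To handle the limit case I would verify directly that $\T_\lambda:=\bigcup_{\alpha<\lambda}\G_\alpha$ is closed under subobjects, quotients, extensions and coproducts. Closure under subobjects and quotients is immediate since each $\G_\alpha$ is hereditary and the chain is increasing. For a coproduct $\bigoplus_i T_i$ with $T_i\in\G_{\alpha_i}$, $\alpha_i<\lambda$: this is where one uses the remark in the text that $\G_\alpha$ is generated (as a hereditary torsion class) by the $\tau_\alpha$-torsion quotients of a fixed generator $G$ of $\G$, so the chain $(\G_\alpha)$ stabilizes at some ordinal $<\lambda$ whenever $\lambda$ exceeds that stabilization point, or — if $\lambda$ is below it — one argues that each $T_i$, being a quotient of a coproduct of copies of $G$, forces the relevant $\alpha_i$ to be bounded; more cleanly, since the whole chain stabilizes, there is a largest distinct term $\G_{\alpha_\infty}$, and for the limit ordinals $\lambda$ that actually occur before stabilization one checks coproduct-closure by hand, while beyond stabilization $\bigcup_{\alpha<\lambda}\G_\alpha=\G_{\alpha_\infty}$ is already a torsion class. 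Extension closure for $0\to T'\to Y\to T''\to 0$ with $T',T''\in\T_\lambda$ follows by taking $\alpha<\lambda$ above both indices and using that $\G_\alpha$ is extension-closed. Hence $\T_\lambda$ is itself a hereditary torsion class, so $\G_\lambda=\T_\lambda=\bigcup_{\alpha<\lambda}\G_\alpha$, and $X\in\G_\lambda$ yields $X\in\G_\alpha$ for some $\alpha<\lambda$, again contradicting minimality.

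The main obstacle is thus entirely the limit-ordinal bookkeeping: making sure the union of the torsion classes below a limit is already closed under coproducts without circularity. Everything else — the successor step, the forward implication, and the reduction to a minimal $\beta$ — is formal. If one prefers to avoid the coproduct subtlety altogether, an alternative is to observe that $X$ itself is a \emph{single} object, so it suffices to work inside the smallest torsion class generated by $X$ together with a generator, where the filtration is essentially set-indexed and stabilizes quickly; but I expect the cleanest write-up is the direct transfinite induction on the minimal $\beta$ with the limit step dispatched as above.
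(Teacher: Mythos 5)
Your forward direction and your successor step are fine, and the successor step ($X\in\G_{\alpha+1}$ with $\Q_\alpha(X)=0$ forces $X\in\ker(\Q_\alpha)=\G_\alpha$) is essentially what the paper does. The genuine gap is exactly where you located the difficulty: the limit case. Your plan needs the equality $\G_\lambda=\bigcup_{\alpha<\lambda}\G_\alpha$, i.e.\ that the union of the chain is already closed under coproducts, and this is false in general. If the filtration is strictly increasing cofinally below $\lambda$ --- say $\lambda=\omega$ and there are objects $T_n\in\G_n\setminus\G_{n-1}$ for every $n$, as happens for $\lmod R$ when $R$ has infinite Gabriel (Krull) dimension, e.g.\ Nagata's Noetherian example --- then $\bigoplus_n T_n$ lies in $\G_\omega$ (the generated torsion class is coproduct-closed) but in no $\G_m$, since $\G_m$ is closed under subobjects and would then contain $T_n$ for $n>m$. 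So the closure you hoped to avoid is unavoidable, and $X\in\G_\lambda$ does not force $X\in\G_\alpha$ for some $\alpha<\lambda$; your minimal-$\beta$ descent therefore cannot pass through limit stages. The appeal to stabilization of the chain is a non sequitur: stabilization only says the chain is eventually constant, whereas the problematic limit ordinals are precisely those at or below the stabilization point, and the remark that ``the $\alpha_i$ are bounded'' gives a bound by $\lambda$, not strictly below it. Your closing alternative (working in a torsion class generated by $X$ and a generator) does not help either, because the issue is not the size of the index set but the strict inclusion $\bigcup_{\alpha<\lambda}\G_\alpha\subsetneq\G_\lambda$.

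The paper's proof gets through the limit case with a strictly weaker claim: if $\delta$ is a limit ordinal and $X\in\G_\delta$, then $X$ is the \emph{directed union of its torsion subobjects}, $X=\bigcup_{\alpha<\delta}\tor_\alpha(X)$ (the standard description of limit stages of the Gabriel filtration), not that $X$ lies in a single $\G_\alpha$. Each $\tor_\alpha(X)$ lies in $\G_\alpha$ and inherits the vanishing hypothesis, because $\tor_{\beta+1}(\tor_\alpha(X))\leq\tor_{\beta+1}(X)$ and $\Q_\beta$ is exact, so $\Q_\beta(\tor_{\beta+1}(\tor_\alpha(X)))=0$ for all $\beta$; the transfinite induction (on the least $\delta$ with $X\in\G_\delta$, exactly as you set it up) then gives $\tor_\alpha(X)=0$ for all $\alpha<\delta$, hence $X=0$. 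If you replace your limit step by this argument, the rest of your write-up goes through.
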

\begin{proof}
By definition of $\widetilde \G$, there is an ordinal $\delta\geq -1$ such that $X\in \G_\delta$. We proceed by transfinite induction on such a $\delta$. Indeed, if $\delta=-1$, then $X=0$ and so there is nothing to prove. If $\delta=\alpha+1$ (so that $X=\tor_{\alpha+1}(X)$), then there is a short exact sequence
\[
0\to \tor_{\alpha}(X)\to X\to X/\tor_{\alpha}(X)\to 0.
\]
Now, $\tor_\alpha(X)=0$ by inductive hypothesis, while $X/\tor_{\alpha}(X)\leq \Q_{\alpha}(\tor_{\alpha+1}(X))=0$, by hypothesis. Hence, $X=0$ as desired. Finally, suppose that $\delta$ is a limit ordinal. In this case, $X\cong \bigcup_{\alpha<\delta}\tor_{\alpha}(X)$ and, by inductive hypothesis, $\tor_{\alpha}(X)=0$ for each $\alpha<\delta$, showing that $X=0$.
\end{proof}

Recall that a Grothendieck category $\G$ is said to be {\em semi-Artinian} if, and only if, every non-trivial object $X\in \G$ has a simple (i.e., $(-1)$-cocritical) subobject. In particular, given an object $X$ in a semi-Artinian Grothendieck category $\G$, there exists an ordinal $\alpha$ and a continuous chain $\{X_\beta:\beta<\alpha\}$ of subobjects of $X$ such that $X=\bigcup_{\beta<\alpha}X_\beta$ and where $X_{\beta+1}/X_\beta$ is a simple object for all $\beta<\alpha$. 
Note that a Grothendieck category is semi-Artinian if, and only if, $\G=\G_0$.

\begin{lemma}\label{pre1}
Given a Grothendieck category $\G$, the category $\G_{\alpha+1}/\G_\alpha$ is semi-Artinian for each ordinal $\alpha$.
%
\end{lemma}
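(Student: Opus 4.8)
The plan is to show that $\G_{\alpha+1}/\G_\alpha$ is semi-Artinian by verifying that every non-trivial object of this quotient category has a simple subobject. First I would unwind the definitions: an object of $\G_{\alpha+1}/\G_\alpha$ is (up to isomorphism) of the form $\Q_\alpha(X)$ for some $X\in\G_{\alpha+1}$, and I may assume $X$ is $\tau_\alpha$-torsion free (replace $X$ by $X/\tor_\alpha X$, which does not change $\Q_\alpha(X)$). The key reduction is that the functor $\Q_\alpha$, restricted to $\tau_\alpha$-torsion free objects of $\G_{\alpha+1}$, is an exact, faithful, coproduct-preserving functor onto $\G_{\alpha+1}/\G_\alpha$ which carries the lattice of $\tau_\alpha$-saturated subobjects of $X$ isomorphically onto the lattice of subobjects of $\Q_\alpha(X)$, and which sends $\alpha$-cocritical objects precisely to the simple objects of $\G_{\alpha+1}/\G_\alpha$ (an $\alpha$-cocritical object is by definition $\tau_\alpha$-torsion free with every proper quotient $\tau_\alpha$-torsion, and under $\Q_\alpha$ this says exactly that $\Q_\alpha$ of it has no proper nonzero subobject).

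Next I would produce, inside a non-trivial such $X\in\G_{\alpha+1}$, a subobject whose image under $\Q_\alpha$ is simple, i.e. an $\alpha$-cocritical subobject (or at least a subquotient that becomes simple). Here I would use the characterization of $\G_{\alpha+1}$ as the smallest hereditary torsion class containing $\G_\alpha$ together with all $\alpha$-cocritical objects: since $X\in\G_{\alpha+1}$ but $X\notin\G_\alpha$ (as $X$ is nonzero and $\tau_\alpha$-torsion free), $X$ cannot have all its subquotients contained in $\G_\alpha$, so some nonzero subquotient of $X$ embeds into, or is built from, an $\alpha$-cocritical object; concretely, a nonzero $\tau_\alpha$-torsion free subobject $Y\le X$ will contain a nonzero $\alpha$-cocritical subobject $C$ — one obtains $C$ by choosing $0\ne c\in Y$ (using that $\G$ has a generator, pick a nonzero map from a generator), looking at the cyclic-like subobject it generates, and then, among its $\tau_\alpha$-saturated subobjects, taking one minimal with respect to not being $\tau_\alpha$-torsion; minimality forces every proper quotient to be $\tau_\alpha$-torsion, i.e. $\alpha$-cocriticality. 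Then $\Q_\alpha(C)$ is a simple subobject of $\Q_\alpha(X)$, which is what we want.

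The main obstacle I anticipate is the existence step in the previous paragraph: guaranteeing that a nonzero $\tau_\alpha$-torsion free object of $\G_{\alpha+1}$ actually contains an $\alpha$-cocritical subobject, rather than merely being a torsion-theoretic iterate of such objects. This requires a Zorn/transfinite argument showing that the generation of $\G_{\alpha+1}$ by $\alpha$-cocriticals over $\G_\alpha$ is "one step" at the level of torsion-free objects — equivalently, that $\G_{\alpha+1}/\G_\alpha$ is generated by simple objects and is locally Artinian in the appropriate sense. I would handle this by transporting the problem through $\Q_\alpha$ to the Grothendieck category $\G_{\alpha+1}/\G_\alpha$, using that a generator of $\G$ maps to a generator there, that the hereditary torsion class $\G_{\alpha+1}$ is generated by the $\tau_\alpha$-torsion subobjects/quotients of a fixed generator, and then invoking the standard fact that a Grothendieck category whose generator is a directed union of objects with "finite cocritical length over $\G_\alpha$" is semi-Artinian; the final bookkeeping is routine once the correspondence between $\tau_\alpha$-saturated subobjects upstairs and subobjects downstairs is in place.
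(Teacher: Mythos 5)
There is a genuine gap, and it sits exactly at the point you yourself flag as ``the main obstacle'': the existence, inside a nonzero $\tau_\alpha$-torsion-free object $X\in\G_{\alpha+1}$, of an $\alpha$-cocritical subobject. The concrete construction you offer does not work: you propose to take, among the $\tau_\alpha$-saturated subobjects of a suitable subobject of $X$, one that is \emph{minimal} with respect to not being $\tau_\alpha$-torsion, and to argue that minimality forces cocriticality. But such a minimal element need not exist: there is no descending chain condition available, and Zorn's lemma fails downward because the intersection of a descending chain of non-torsion (saturated) subobjects can perfectly well be torsion or zero (already in $\mathrm{Ab}$ with the usual torsion theory, $\bigcap_n 2^n\Z=0$ while every $2^n\Z$ is torsion-free). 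Note also that your construction never actually uses the hypothesis $X\in\G_{\alpha+1}$, whereas the statement ``every nonzero $\tau_\alpha$-torsion-free object contains an $\alpha$-cocritical subobject'' is false for general objects of $\G$ --- this is precisely why $\widetilde\G$ can be a proper subcategory of $\G$ (cf.\ the remark about non-discrete valuation domains). Finally, your last paragraph defers to ``the standard fact'' that a category generated by simples in the appropriate sense is semi-Artinian; invoked at that level of generality for $\G_{\alpha+1}/\G_\alpha$, this is essentially the assertion to be proved.

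Two ways to close the gap. The paper's route: under the quotient functor $\Q_\alpha$, hereditary torsion classes of $\G$ containing $\G_\alpha$ correspond to hereditary torsion classes of $\G/\G_\alpha$, and $\alpha$-cocritical objects of $\G$ correspond to simple objects of $\G/\G_\alpha$ (in both directions: $\Q_\alpha$ of a cocritical is simple, and the local object $\S_{\tau_\alpha}(S)$ of a simple $S$ is cocritical); hence $\G_{\alpha+1}$, being generated over $\G_\alpha$ by the cocriticals, corresponds to $(\G/\G_\alpha)_0$, so $\G_{\alpha+1}/\G_\alpha\cong(\G/\G_\alpha)_0$, which is semi-Artinian by the very definition of the $0$-th Gabriel class (the hereditary torsion class generated by simples consists of objects all of whose nonzero subquotients contain a simple subobject). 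Alternatively, if you want to argue directly in $\G$, you must replace your minimality argument by showing that the class $\mathcal D$ of all $X\in\G$ such that every nonzero $\tau_\alpha$-torsion-free subquotient of $X$ contains an $\alpha$-cocritical subobject is closed under subobjects, quotients, extensions and coproducts, and contains $\G_\alpha$ and all $\alpha$-cocritical objects; then $\G_{\alpha+1}\subseteq\mathcal D$ by minimality of $\G_{\alpha+1}$, which is exactly the existence statement you need, and the rest of your first paragraph (saturated-subobject lattice isomorphism, cocritical $\mapsto$ simple) then correctly finishes the proof.
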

\begin{proof}
By construction, we have that $\G_{\alpha+1}/\G_\alpha\cong (\G/\G_{\alpha})_0$.
\end{proof}

Let $\G$ be a Grothendieck category, $X\in \G$, and denote by $\mathcal L(X)$ the lattice of subobjects of $X$. We say that $X$ is {\em Noetherian}, {\em Artinian} or of {\em finite length} if the lattice $\mathcal L(X)$ has the ascending chain condition, the descending chain condition or both, respectively. In case $X$ is of finite length, we denote by $\ell(X)\in \N$ the composition length of the lattice $\mathcal L(X)$.

\begin{lemma}\label{prop_gabriel_lemma}
Let $\G$ be a Grothendieck category, and $0\neq N\in \G$ a Noetherian object. Then, the following statements hold true:
\begin{enumerate}[\rm (1)]
\item $N\in \widetilde\G$;
\item $\tor_{\alpha}(N)$ is a Noetherian object in $\G_{\alpha}$, for each ordinal $\alpha$;
\item $\Q_{\alpha}(N)$ is a Noetherian object in $\G/\G_{\alpha}$, for each ordinal $\alpha$;
\item $\Q_\alpha(\tor_{\alpha+1}(N))$ is an object of finite length in $\G_{\alpha+1}/\G_{\alpha}$, for each ordinal $\alpha$.
\end{enumerate}
\end{lemma}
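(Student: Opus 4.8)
The plan is to treat the four statements in order, using the structural features of the Gabriel filtration and the fact that Noetherianity is inherited by subobjects, quotients, and — crucially — both torsion and quotient functors of hereditary torsion theories. For (2), I would recall that $\tor_\alpha(N)$ is a subobject of $N$, and a subobject of a Noetherian object is Noetherian; moreover $\tor_\alpha$ lands in $\G_\alpha$ by definition, and the subobject lattice of $\tor_\alpha(N)$ computed in $\G_\alpha$ agrees with the one computed in $\G$ since $\G_\alpha$ is closed under subobjects (hereditariness). For (3), I would use the exactness of $\Q_\alpha$ together with the fact that the subobjects of $\Q_\alpha(N)$ in $\G/\G_\alpha$ are, via the adjunction $(\Q_\alpha,\S_\alpha)$, in order-preserving bijection with the $\tau_\alpha$-closed subobjects of $N$; an ascending chain of these pulls back to an ascending chain in $\mathcal L(N)$, which must stabilize, so $\Q_\alpha(N)$ is Noetherian in $\G/\G_\alpha$.

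For (1), I would argue that the chain $\{\tor_\alpha(N)\}_\alpha$ is an ascending chain of subobjects of the Noetherian object $N$, hence stabilizes: there is an $\alpha$ with $\tor_\alpha(N)=\tor_{\alpha+1}(N)=\cdots$. I then want to see that this common value is all of $N$, i.e.\ that $\Q_\alpha(N)=0$, which would give $N=\tor_\alpha(N)\in\G_\alpha\subseteq\widetilde\G$. If $\Q_\alpha(N)\neq 0$, then by (3) it is a nonzero Noetherian object of $\G/\G_\alpha$, so it has a nonzero finitely generated subobject, and a standard argument produces a nonzero quotient of a subobject that is $(-1)$-cocritical in $\G/\G_\alpha$, i.e.\ an $\alpha$-cocritical object of $\G$; but such an object is $\tau_\alpha$-torsion-free while its image under $\tor_{\alpha+1}/\G_\alpha$-considerations shows it lies in $\G_{\alpha+1}$, forcing $\tor_{\alpha+1}(N)\supsetneq\tor_\alpha(N)$, contradicting stabilization. (Here I would lean on the fact, implicit in the construction of the Gabriel filtration, that in the semi-Artinian category $\G_{\alpha+1}/\G_\alpha\cong(\G/\G_\alpha)_0$ of Lemma~\ref{pre1}, a nonzero Noetherian object — being semi-Artinian and Noetherian — actually has finite length.)

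Statement (4) should then fall out: by (2) and (3) applied with $\alpha+1$ and $\alpha$, the object $M:=\Q_\alpha(\tor_{\alpha+1}(N))$ is Noetherian in $\G_{\alpha+1}/\G_\alpha$; but this latter category is semi-Artinian by Lemma~\ref{pre1}, and a Noetherian object in a semi-Artinian Grothendieck category has finite length, since its canonical continuous socle-type filtration with simple quotients must be finite by the ascending chain condition. Hence $\ell(M)<\infty$. The main obstacle I anticipate is the bookkeeping in (1): making precise the passage between $\tau_\alpha$-closed subobjects of $N$ and subobjects of $\Q_\alpha(N)$, and verifying that a nonzero Noetherian object of $\G/\G_\alpha$ genuinely contributes a new $\alpha$-cocritical subquotient — in other words, that $\G_{\alpha+1}$ strictly enlarges $\G_\alpha$ as soon as $\G/\G_\alpha\neq 0$ has a Noetherian object, which is exactly what prevents the stabilized $\tor_\alpha(N)$ from being a proper subobject of $N$.
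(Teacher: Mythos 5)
Your treatments of (2), (3) and (4) coincide with the paper's: (2) and (3) via the identification of the subobject lattices of $\tor_\alpha(N)$ in $\G_\alpha$, resp.\ of $\Q_\alpha(N)$ in $\G/\G_\alpha$, with sublattices of $\mathcal L(N)$ (the paper cites Stenstr\"om for the latter), and (4) by combining Noetherianity with the semi-Artinianity of $\G_{\alpha+1}/\G_\alpha$ from Lemma~\ref{pre1}, so that the continuous chain with simple factors must be finite. These parts are correct.

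Your argument for (1), however, has a genuine gap at the step ``if $\Q_\alpha(N)\neq 0$ then $\tor_{\alpha+1}(N)\supsetneq\tor_\alpha(N)$''. What the standard maximal-subobject argument produces from a nonzero Noetherian object is an $\alpha$-cocritical \emph{quotient of a subobject}; but torsion functors detect \emph{subobjects}, and a cocritical subquotient does not force the $\tau_{\alpha+1}$-torsion of $N$ to grow. In fact the implication is false: take $\G=\lmod R$ with $R=N=k[x,y]$. Here $\G_0$ is the class of semi-Artinian modules and every $0$-cocritical module has support avoiding the generic prime, so $\G_1$ consists of classical torsion modules; hence $\tor_0(N)=\tor_1(N)=0$ although $\Q_0(N)\neq 0$ (and $N$ has a $0$-cocritical subquotient, e.g.\ $R/(x)$). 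The chain $\{\tor_\alpha(N)\}_\alpha$ can thus pause at a successor step and only jump later ($\tor_2(N)=N$ in this example), so stabilization of one step, or even the existence of a maximal value of the chain, does not by itself yield $N=\tor_\alpha(N)$; what you would really need is that a nonzero Noetherian object which is $\tau_\beta$-torsion-free for \emph{all} $\beta$ cannot exist, and that is essentially the statement (1) you are trying to prove. The paper closes this circle differently: assuming $N\notin\widetilde\G$, it picks (by ACC) a subobject $K\leq N$ maximal with $N/K\notin\widetilde\G$, sets $\alpha:=\sup_H\alpha_H$ over the ordinals $\alpha_H$ with $N/H\in\G_{\alpha_H}$ for $K\lneq H\leq N$, and observes that $N/K$ is then $\alpha$-cocritical (or $\tau_\alpha$-torsion, or an extension of such), hence lies in $\G_{\alpha+1}\subseteq\widetilde\G$, a contradiction. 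This quotient-side induction avoids any claim about growth of the torsion subobjects and is the repair your plan needs.
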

\begin{proof}
(1). Suppose, looking for a contradiction, that $N\notin \widetilde\G$. By Noetherianity, we can select a maximal subobject $K\leq N$ such that $N/K\notin \widetilde\G$. Then, for each $K\lneq H\leq N$ there exists an ordinal $\alpha_H$ such that $N/H\in \G_{\alpha_H}$, let $\alpha:=\sup_H\alpha_H$. By construction, $N/K$ is $\alpha$-cocritical, so $N/K\in \G_{\alpha+1}\subseteq \widetilde \G$, which is a contradiction.

\smallskip\noindent
(2). The lattice  $\L_{\G_\alpha}(\tor_\alpha(N))$ of subobjects of $\tor_\alpha(N)$ in $\G_\alpha$ can be identified with the sublattice of $\L(N)$ of those $K\in \L(N)$ such that $K\leq \tor_\alpha(N)$. In particular, any ascending chain in $\L_{\G_\alpha}(\tor_\alpha(N))$ induces an ascending chain in $\L(N)$ and, therefore, it stabilizes after a finite number of steps.

\smallskip\noindent
(3). The lattice $\L_{\G/\G_\alpha}(\Q_\alpha(N))$  of subobjects of $\Q_\alpha(N)$ in $\G/\G_\alpha$ can be identified with the sublattice of $\L(N)$ of those $K\in \L(N)$ such that $N/K$ is $\tau_\alpha$-torsion free (this is proved in \cite[Corollary IX.4.4]{sten} for categories of modules, and the argument carries over to general Grothendieck categories). In particular, any ascending chain in $\L_{\G/\G_\alpha}(\Q_\alpha(N))$ induces an ascending chain in $\L(N)$ and, therefore, it stabilizes after a finite number of steps.

\smallskip\noindent
(4). By (2) and (3) we deduce that $\Q_\alpha(\tor_{\alpha+1}(N))$ is a Noetherian object in $\G_{\alpha+1}/\G_\alpha$. As we have previously observed, $\G_{\alpha+1}/\G_\alpha$ is semi-Artinian and, therefore, $\Q_\alpha(\tor_{\alpha+1}(N))$ can be written as the union of a continuous chain whose subfactors are simple objects. By Noetherianity, any such ascending chain has to be finite. 
\end{proof}

\subsection{Group actions on objects of a Grothendieck category}

Let $\G$ be a Grothendieck category and  consider a group $G$ as a category with one object. The {\em category of representations} of $G$ on $\G$ is the functor category $\G^G:=\Fun(G,\G)$. An alternative, but equivalent, description of this category can be given as follows:
\begin{itemize}
\item each object of $\G^G$ can be thought of as a pair ${}_GX:=(X,\lambda\colon G\to \Aut_\G(X))$, with $X$ in $\G$ and $\lambda$ a homomorphism to the automorphism group of $X$;
\item given two objects ${}_GX_i:=(X_i,\lambda_i\colon G\to \Aut_\G(X_i))$ (with $i=1,2$), a morphism $\phi\colon {}_GX_1\to {}_GX_2$ in $\G^G$ is a $G$-{\em equivariant} morphism $\phi\colon X_1\to X_2$ in $\G$, that is, the following square commutes for all $g\in G$:
\[
\xymatrix{
X_1\ar[d]_{\lambda_1(g)}\ar[r]^{\phi}&X_2\ar[d]^{\lambda_2(g)}\\
X_1\ar[r]_\phi&X_2.}
\]
\end{itemize}
As an example, one can take the trivial group $G=\bbone$ and, in that case, it is easy to verify that $\G^{\bbone}\cong \G$. 
Consider a group $G$ and take the obvious inclusion $\iota\colon\bbone\to G$. This induces a {\em restriction functor} (also called a {\em forgetful} functor)
\[
\res_{\bbone}^G\colon \G^G\to \G,
\]
mapping a $G$-representation ${}_GX\mapsto X$. The left adjoint to $\res_{\bbone}^G$ is the {\em extension functor}
\[
\ext_{\bbone}^G\colon \G\to \G^{G}.
\]
Given an object $X\in \G$, we have that $\ext_{\bbone}^G(X)\cong(X^{(G)},\beta\colon G\to \Aut_{\G}(X^{(G)}))$, where $X^{(G)}$ is the coproduct of $|G|$-many copies of $X$ in $\G$, and  $\beta$ is the usual {\em Bernoulli shift} on $X^{(G)}$, that is, for each $g\in G$, $\beta^g\colon X^{(G)}\to X^{(G)}$ can be represented by the column-finite square matrix $\beta^g=(\beta^g_{g_2,g_1})_{g_1,g_2\in G}\in \Mat_{|G|}(\End_\G(X))$:
\[
\beta^g_{g_2,g_1}:=\begin{cases}\id_X&\text{if $g_2=gg_1$;}\\
0&\text{otherwise.}\end{cases}
\]
\begin{example}
Let $R$ be a ring and $G$ a group. The {\em group-ring} $R[G]$ is defined as the Abelian group $\bigoplus_{g\in G}\underline{g}R$, with the sum defined componentwise, and product defined as follows: given $x=\sum_{g\in G}\underline g x_g$ and $y=\sum_{g\in G}\underline g y_g \in R[G]$,
\[
x\cdot y:=\sum_{g\in G}\underline g\left(\sum_{h_1h_2=g}x_{h_1}y_{h_2}\right).
\]
With these operations, $R[G]$ becomes an associative and unitary ring. Consider now the category of left $R[G]$-modules $\lmod{R[G]}$. Essentially by definition, $\lmod{R[G]}$ is equivalent to the category of additive functors $\mathrm{Add}(R[G],\mathrm{Ab})$ from $R[G]$, viewed as a one-object preadditive category, to the category of Abelian groups $\mathrm{Ab}$. Similarly, these categories are both equivalent to $\mathrm{Add}(\Z[G],\lmod R)$ and also to the category of (non-necessarily additive) functors $\mathrm{Fun}(G,\lmod R)$, that is, $\lmod{R[G]}\cong (\lmod R)^G$. In this special example, the restriction and induction functors are called, respectively, the forgetful functor and the extension of scalars:
\[
\res_{\bbone}^G\cong \Hom_{R[G]}({}_{R[G]}R[G]_R,-)\qquad\text{and}\qquad \ext_{\bbone}^G\cong {}_{R[G]}R[G]_R\otimes_R-.
\]
\end{example}

\medskip
Take now a Grothendieck category $\G$ and a group $G$. Limits and colimits in $\G^G$ can be computed ``as in $\G$''. More explicitly, given a small category $I$ and a functor $F\colon I\to \G^G$, its (co)limit is the (co)limit of $\res^G_\bbone\circ F\colon I\to \G$, endowed with the unique compatible $G$-action. In particular, the fact that $\G$ is a cocomplete Abelian category with exact directed colimits implies that $\G^G$ has the same properties. Furthermore, given a generator $X\in \G$, the induced representation $\ext^G_\bbone X$ is a generator of $\G^G$, which is therefore a Grothendieck category.

\medskip
An object $X\in \G$ is said to be {\em compact} if the functor $\Hom_\G(X,-)\colon \G\to \mathrm{Ab}$ commutes with coproducts. In particular, if $X$ is Noetherian (or, more generally, finitely generated, that is, $\Hom_\G(X,-)$ commutes with monomorphic directed colimits) then it is compact. Observe that, given a compact object $X\in \G$, an object $Y\in \G$ and a set $I$, we have the following natural isomorphisms:
\[
\xymatrix{
\Hom_\G(X^{(I)},Y^{(I)})\cong \prod_I\Hom_\G(X,Y^{(I)})\cong \prod_I\bigoplus_I\Hom_\G(X,Y),
}
\]   
that is, a morphism $\phi\colon X^{(I)}\to Y^{(I)}$ can be naturally considered as a column-finite matrix with coefficients in $\Hom_\G(X,Y)$.

\begin{proposition}\label{explicit_G_covariance}
Let $\G$ be a Grothendieck category, $X,\, Y\in \G$ compact objects, and let $G$ be a group. Consider a morphism $\phi=(\phi_{g_2,g_1})_{g_1,g_2\in G}\colon X^{(G)}\to Y^{(G)}$ in $\G$. Then, $\phi$ represents a morphism $\ext^G_\bbone X \to \ext^G_\bbone Y$ in $\G^{G}$ (that is, it is $G$-equivariant with respect to the Bernoulli actions) if and only if:
\[
\phi_{g^{-1}g_2,g_1}=\phi_{g_2,gg_1},\qquad\text{for all $g,\, g_1,\, g_2\in G$.}
\]
Furthermore, given a morphism $\psi=(\psi_{g_2,g_1})_{g_1,g_2}\colon \ext_{\bbone}^G Y\to \ext^G_\bbone X $ in $\G^G$, so that $\psi_{g^{-1}g_2,g_1}=\psi_{g_2,gg_1}$ for all $g,\, g_1,\, g_2\in G$, we have that $\phi\circ\psi=\id_{\ext_{\bbone}^GY}$ if, and only if, the following equality is verified, for all $g_1,\, g_2\in G$:
\[
\sum_{h\in G}\phi_{g_2,h} \psi_{h,g_1}=\begin{cases}\id_Y&\text{if $g_1=g_2$;}\\
0&\text{otherwise.}
\end{cases}
\]
\end{proposition}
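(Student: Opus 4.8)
The plan is to verify directly the two claimed characterizations by unwinding the definition of the Bernoulli action in terms of the matrices $\beta^g$, using the compactness of $X$ and $Y$ to make sense of the matrix products involved. First I would recall that, since $X,Y$ are compact, a morphism $X^{(G)}\to Y^{(G)}$ in $\G$ is the same datum as a column-finite matrix $(\phi_{g_2,g_1})_{g_1,g_2\in G}$ with $\phi_{g_2,g_1}\in\Hom_\G(X,Y)$, and composition of such morphisms corresponds to the usual (finitary, hence well-defined) matrix multiplication. The Bernoulli action $\beta^g$ on $X^{(G)}$ is the permutation matrix with $\id_X$ in position $(g_2,g_1)$ exactly when $g_2=gg_1$, and likewise on $Y^{(G)}$; call the latter $\gamma^g$.

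For the first assertion, $G$-equivariance of $\phi$ means $\gamma^g\circ\phi=\phi\circ\beta^g$ for all $g\in G$. Computing the $(g_2,g_1)$-entry of the left-hand side, $\sum_{h}\gamma^g_{g_2,h}\phi_{h,g_1}=\phi_{g^{-1}g_2,g_1}$ (the only surviving term is $h=g^{-1}g_2$), and of the right-hand side, $\sum_{h}\phi_{g_2,h}\beta^g_{h,g_1}=\phi_{g_2,gg_1}$ (the only surviving term is $h=gg_1$). Equating entry by entry gives exactly the condition $\phi_{g^{-1}g_2,g_1}=\phi_{g_2,gg_1}$ for all $g,g_1,g_2$, which is the stated criterion; conversely, reading the computation backwards shows the condition forces $\gamma^g\circ\phi=\phi\circ\beta^g$ for every $g$. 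For the second assertion I would simply note that, again by compactness, the composite $\phi\circ\psi\colon\ext_\bbone^G Y\to\ext_\bbone^G Y$ is represented by the matrix whose $(g_2,g_1)$-entry is $\sum_{h\in G}\phi_{g_2,h}\psi_{h,g_1}$ (a finite sum, since $\psi$ is column-finite and $X$ is compact, so this is genuinely an element of $\Hom_\G(Y,Y)$), and then $\phi\circ\psi=\id_{\ext_\bbone^G Y}$ holds precisely when this matrix is the identity matrix, i.e.\ equals $\id_Y$ on the diagonal and $0$ off the diagonal, which is the displayed equality. One should also observe that the hypothesis $\psi_{g^{-1}g_2,g_1}=\psi_{g_2,gg_1}$ (equivariance of $\psi$) guarantees that the composite is a morphism in $\G^G$, so that comparing it with $\id$ in $\G^G$ is legitimate.

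I do not expect any serious obstacle here: the statement is essentially a bookkeeping translation of ``equivariant morphism between induced representations'' into matrix language, and the only point requiring genuine care is the repeated appeal to compactness of $X$ (and $Y$) to ensure that all the infinite-looking sums $\sum_{h\in G}\phi_{g_2,h}\psi_{h,g_1}$ and the matrix descriptions of morphisms and their composites are well-defined in the Grothendieck category $\G$, exactly as in the discussion preceding the statement. The mildly fiddly part is keeping the index substitutions $h=g^{-1}g_2$ and $h=gg_1$ straight and checking that the quantifier ``for all $g,g_1,g_2$'' in the conclusion really does capture all the equations $\gamma^g\phi=\phi\beta^g$, but this is routine once one writes out a single entry.
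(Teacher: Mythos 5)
Your proposal is correct and follows essentially the same route as the paper: both proofs verify equivariance by computing the $(g_2,g_1)$-entries of $\beta^g\phi$ and $\phi\beta^g$ (your $\gamma^g$ is just the paper's $\beta^g$ on $Y^{(G)}$) and then identify $\phi\circ\psi=\id$ with the entrywise identity-matrix condition, with compactness justifying the matrix formalism exactly as in the discussion preceding the statement. No gaps.
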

\begin{proof}
For the first claim about $\phi$, we have to verify that $\beta^g\phi=\phi\beta^g$ for all $g\in G$, and it is easily seen that, for each $g,\, g_1,\, g_2\in G$:
\begin{align*}
(\beta^g\phi)_{g_2,g_1}=\sum_{h\in G}\beta^g_{g_2,h}\phi_{h,g_1}=\phi_{g^{-1}g_2,g_1} \qquad\text{and}\\
 (\phi\beta^g)_{g_2,g_1}=\sum_{h\in G}\phi_{g_2,h}\beta^{g}_{h,g_1}=\phi_{g_2,gg_1}.
\end{align*}
It is also clear that $\phi\circ\psi=\id_{\ext_{\bbone}^GY}$ if, and only if, $(\phi\circ\psi)_{g_2,g_1}=\id_Y$ if $g_1=g_2$  and $(\phi\circ\psi)_{g_2,g_1}=0$ if $g_1\neq g_2\in G$. Furthermore, $(\phi\circ\psi)_{g_2,g_1}=\sum_{h\in G}\phi_{g_2,h}\psi_{h,g_1}$\end{proof}

Let us conclude with a brief discussion about the behavior of representations with respect to torsion and localization:

\begin{proposition}
Let $\G$ be a Grothendieck category, $\tau=(\T,\F)$ a hereditary torsion theory in $\G$ and $G$ a group. Define:
\begin{itemize}
\item $\T^G:=\{(X,\lambda\colon G\to \End_\G(X))\in \G^G:X\in\T\}$;
\item $\F^G:=\{(X,\lambda\colon G\to \End_\G(X))\in \G^G:X\in\F\}$;
\item $\tau^{G}:=(\T^G,\F^G)$.
\end{itemize}
Then, $\tau^G$ is a hereditary torsion theory in $\G^G$ and, given $(X,\lambda\colon G\to \End_\G(X))\in \G^G$,
\begin{enumerate}[\rm (1)]
\item $\tor_{\tau^G}(X,\lambda)=(\tor_\tau(G),\tor_\tau(\lambda))$, where $(\tor_\tau\lambda)_g:=\tor_\tau(\lambda_g)$, for all $g\in G$;
\item there is an equivalence $\G^G/\T^G\cong (\G/\T)^G$ and, identifying these categories, we have that $\Q_{\tau^G}(X,\lambda)=(\Q_\tau(G),\Q_\tau(\lambda))$, where $(\Q_\tau\lambda)_g:=\Q_\tau(\lambda_g)$, for all $g\in G$.
\end{enumerate}
\end{proposition}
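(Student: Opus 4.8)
The plan is to reduce everything to the corresponding facts in $\G$ by means of the exact and faithful forgetful functor $\res:=\res_\bbone^G\colon\G^G\to\G$, along which all (co)limits in $\G^G$ are computed, recalling also that $\G^G$ is itself a Grothendieck category. First I would check that $\tau^G=(\T^G,\F^G)$ is a hereditary torsion theory. The closure of $\T^G$ under quotients, extensions, coproducts and subobjects is immediate, since each of these operations in $\G^G$ is performed on the underlying objects, so the required closures follow from those of $\T$ in $\G$ (here is where hereditariness of $\tau$ is used). The orthogonality $\Hom_{\G^G}(T,F)=0$ for $T\in\T^G$, $F\in\F^G$ holds because a $G$-equivariant morphism is in particular a morphism in $\G$, hence lies in $\Hom_\G(\res T,\res F)=0$.

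For the torsion short exact sequence, the key point is that, as recalled above, $\tor_\tau$ is a subfunctor of $\id_\G$; consequently each automorphism $\lambda_g$ of $X$ maps $\tor_\tau(X)$ into itself and therefore restricts to an automorphism $\tor_\tau(\lambda_g)$ of $\tor_\tau(X)$. This produces an object $(\tor_\tau X,\tor_\tau\lambda)\in\T^G$ together with a $G$-equivariant inclusion into $(X,\lambda)$ whose cokernel is $(X/\tor_\tau X,\bar\lambda)\in\F^G$, since $X/\tor_\tau X\in\F$. This is precisely the torsion sequence of $(X,\lambda)$, which at once completes the verification that $\tau^G$ is a (hereditary) torsion theory and, by uniqueness of the torsion sequence, yields the formula in (1).

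For (2) I would avoid describing the $\tau^G$-local objects by hand — injective envelopes in $\G^G$ need not be computed underneath, so that route is awkward — and instead invoke the characterization of Giraud subcategories. Post-composition with the exact functor $\Q_\tau$ and with $\S_\tau$ yields functors $(\Q_\tau)^G\colon\G^G\to(\G/\T)^G$ and $(\S_\tau)^G\colon(\G/\T)^G\to\G^G$, acting on objects by $(X,\lambda)\mapsto(\Q_\tau X,\Q_\tau\lambda)$ and $(Y,\mu)\mapsto(\S_\tau Y,\S_\tau\mu)$. Since exactness in these representation categories is detected underneath and $\Q_\tau$ is exact, $(\Q_\tau)^G$ is exact; since $\S_\tau$ is fully faithful, so is $(\S_\tau)^G$; and the adjunction $\Q_\tau\dashv\S_\tau$ lifts to $(\Q_\tau)^G\dashv(\S_\tau)^G$, because a $G$-equivariant morphism $\Q_\tau X\to Y$ corresponds, naturally in both variables, to a $G$-equivariant morphism $X\to\S_\tau Y$. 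Hence $(\S_\tau)^G$ realizes $(\G/\T)^G$ as a Giraud subcategory of $\G^G$, whose associated hereditary torsion theory has torsion class $\ker\big((\Q_\tau)^G\big)=\{(X,\lambda):\Q_\tau X=0\}=\T^G$ and (testing against the induced objects $\ext_\bbone^G T$, $T\in\T$) torsion-free class $\F^G$; that is, the associated torsion theory is exactly $\tau^G$.

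Since the Giraud subcategory attached to a hereditary torsion theory is unique, this Giraud subcategory coincides with $\G^G/\T^G$, which gives the equivalence $\G^G/\T^G\cong(\G/\T)^G$; and under it the reflector $\Q_{\tau^G}$ must agree up to natural isomorphism with $(\Q_\tau)^G$, that is $\Q_{\tau^G}(X,\lambda)=(\Q_\tau X,\Q_\tau\lambda)$. The real obstacle here is conceptual rather than computational: one must recognize that (2) should be handled through the universal property of localization rather than by a direct analysis of $\tau^G$-local objects; the remaining work — lifting exactness, the adjunction and full faithfulness to $\G^G$ and $(\G/\T)^G$, and the bookkeeping that identifies the torsion-free class — is routine, given that (co)limits and exactness in these representation categories are those of the underlying Grothendieck categories.
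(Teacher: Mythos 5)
Your proposal is correct and follows essentially the same route as the paper: verify the closure properties and the componentwise torsion sequence to get $\tau^G$ and formula (1), then lift the adjunction $\Q_\tau\dashv\S_\tau$ to $(\Q_\tau)^G\dashv(\S_\tau)^G$ with $(\Q_\tau)^G$ exact, so that $(\G/\T)^G$ is the Giraud subcategory of $\G^G$ with $\ker((\Q_\tau)^G)=\T^G$, giving $\G^G/\T^G\cong(\G/\T)^G$ and $\Q_{\tau^G}=(\Q_\tau)^G$. The only difference is one of detail: where you appeal to naturality of the adjunction bijection in both variables to transfer $G$-equivariance, the paper spells this out via the unit $\eta_X$ and uniqueness of the adjoint morphism, which is the same argument.
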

\begin{proof}
Let $(X,\lambda)\in \G^G$ and consider $(\tor_\tau X, \tor_\tau\lambda)\in \T^G$ and $(X/\tor_\tau X, \bar\lambda)\in \F^G$, such that $(\tor_\tau\lambda)_g:=\tor_\tau(\lambda_g)$ and $\bar\lambda_g\colon X/\tor_\tau X\to X/\tor_\tau X$ is the map induced on the quotient (for each $g\in G$). It is easily seen that $\iota\colon \tor_\tau X\to X$ is $G$-equivariant with respect to $\tor_\tau(\lambda)$ and $\lambda$, and that $\pi\colon X\to X/\tor_\tau X$  is $G$-equivariant with respect to $\lambda$ and $\bar\lambda$. We then obtain the following short exact sequence in $\G^G$:
\[
\xymatrix{
0\ar[r]& (\tor_\tau X,\tor_\tau\lambda)\ar[r]^-{\iota}&(X,\lambda)\ar[r]^-\pi&(X/\tor_\tau X,\bar\lambda)\ar[r]&0.
}
\]
Furthermore, one can use the closure properties of $\T$ to show that $\T^G$ is a hereditary torsion class. Finally,  given ${}_GT\in \T^G$ and ${}_GF\in \F^G$, 
\[
\Hom_{\G^G}({}_GT,{}_GF)\subseteq \Hom_{\G}(T,F)=0,
\] 
proving that $\tau^G$ is a hereditary torsion theory in $\G^G$. Take now the  adjunction
\[
\Q_\tau : \G\leftrightarrows \G/\T : \S_\tau,
\]
identify $\G/\T$ with the essential image of the fully faithful functor $\S_\tau$ (which is then identified with the inclusion in $\G$). For each $X\in \G$, we denote by $\eta_X\colon X\to \Q_\tau X$ the $X$-component of the unit of the above adjunction.  Consider now two functors:
\[
\Q^G_\tau : \G^G\leftrightarrows (\G/\T)^G : \S^G_\tau,
\]
where $\Q^G_\tau(X,\lambda):=(\Q_\tau X,\Q_\tau \lambda)$ for each $(X,\lambda)\in \G^G$, with $(\Q_\tau \lambda)_g:=\Q_\tau (\lambda_g)$ for each $g\in G$, and with $\S^{G}_\tau$ the inclusion of the full subcategory $(\G/\T)^G $ of $\G^G$ (with the same identifications above). Let $(X,\lambda)\in \G^G$ and note that $\eta_X\colon X\to \Q_\tau X$ is $G$-equivariant (when taking the $G$-action $\Q_\tau \lambda$ on $\Q_\tau X$) by  naturality. Let now $(Y,\lambda')\in (\G/\T)^G$ and take a morphism $\phi\colon {}_GX\to {}_GY$ in $\G^G$. By the adjunction $\Q_\tau \dashv \S_\tau$, there exists a unique morphism $\bar\phi\colon\Q_\tau X\to Y$ in $\G$ such that $\bar \phi\circ \eta_X=\phi$. Let us show that  $\bar \phi$ is  $G$-equivariant. Indeed, let $g\in G$, then:
\begin{align*}
\lambda_g'\circ\bar \phi\circ \eta_X&=\lambda_g'\circ \phi &\text{since $\bar\phi\circ \eta_X=\phi$;}\\
&=\phi\circ\lambda_g&\text{since $\phi$ is $G$-equivariant;}\\
&=\bar\phi\circ \eta_X\circ\lambda_g&\text{since $\bar\phi\circ \eta_X=\phi$;}\\
&=\bar \phi\circ \Q_\tau \lambda_g\circ \eta_X&\text{since $\eta_X$ is $G$-equivariant.}
\end{align*}
Since $\eta_X$ is the unit and $Y\in \G/\T$, the equality $\lambda_g'\circ\bar \phi\circ \eta_X=\bar \phi\circ \Q_\tau \lambda_g\circ \eta_X\colon X\to Y$ implies that $\lambda_g'\circ\bar \phi=\bar \phi\circ \Q_\tau \lambda_g$, as desired.
It is also easy to show that $\Q_\tau^G$ is exact (using the exactness of $\Q_\tau$) and, therefore, $(\G/\T)^G$ becomes a Giraud subcategory of $\G^G$, associated with the hereditary torsion class 
\[
\ker(\Q_\tau^G)=\{{}_GX\in \G^G:\Q_\tau(X)=0\}=\T^G.
\] 
In particular, $(\G/\T)^G\cong \G^G/\T^G$.
\end{proof}

\begin{corollary}\label{coro_detect_0_action}
Let $\G$ be a Grothendieck category, $G$ a group, $N\in \G$ a Noetherian object and $M:=\ext^G_\bbone N\in \G^G$. Let $_{G}K\leq M$ be a subobject of $M$ in $\G^G$, then, $K=0$ if, and only if, $\Q^G_\alpha(\tor^G_{\alpha+1}(K))=0$, for all $\alpha$.
\end{corollary}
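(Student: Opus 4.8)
The plan is to transfer the statement, via the preceding proposition, from $\G^G$ down to $\G$ and then to invoke Lemma~\ref{detect_trivial_lemma}. Write ${}_GK=(K_0,\mu\colon G\to\Aut_\G(K_0))$, and recall that the underlying object of $M=\ext^G_\bbone N$ is $N^{(G)}$. Since kernels in $\G^G$ are computed as in $\G$, the inclusion ${}_GK\hookrightarrow M$ has underlying monomorphism $K_0\hookrightarrow N^{(G)}$ in $\G$, so $K_0$ is a subobject of $N^{(G)}$. One implication is trivial, and so is the case $N=0$, hence I assume $N\neq0$ and that $\Q^G_\alpha(\tor^G_{\alpha+1}(K))=0$ for all $\alpha$, aiming to conclude that $K=0$.

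The first step is to check that $K_0\in\widetilde\G$. Since $N$ is a non-zero Noetherian object, Lemma~\ref{prop_gabriel_lemma}(1) gives $N\in\widetilde\G$. Now $\widetilde\G$ is a hereditary torsion class (it is the stable value of the Gabriel filtration of $\G$), hence closed under coproducts and under subobjects; therefore $N^{(G)}\in\widetilde\G$, and consequently $K_0\in\widetilde\G$. The second step is to unwind the hypothesis using the preceding proposition: under the identifications made there, the functor $\tor^G_{\alpha+1}$ sends ${}_GK$ to $(\tor_{\alpha+1}(K_0),\tor_{\alpha+1}(\mu))$ and then $\Q^G_\alpha$ sends this to $(\Q_\alpha(\tor_{\alpha+1}(K_0)),\Q_\alpha(\tor_{\alpha+1}(\mu)))$, the action being taken componentwise. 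Since an object of $\G^G$ vanishes exactly when its underlying object in $\G$ vanishes, the hypothesis is equivalent to the assertion that $\Q_\alpha(\tor_{\alpha+1}(K_0))=0$ in $\G$ for every $\alpha$.

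Finally, since $K_0\in\widetilde\G$, Lemma~\ref{detect_trivial_lemma} applies to $K_0$ and yields $K_0=0$, hence ${}_GK=0$, as desired. The only substantive point is the first step, i.e.\ that the underlying object $K_0$ lies in $\widetilde\G$: this is exactly where the Noetherianity of $N$ is used, through Lemma~\ref{prop_gabriel_lemma}(1) and the closure properties of the hereditary torsion class $\widetilde\G$. Everything else is the routine translation between $\G$ and $\G^G$ afforded by the preceding proposition, combined with Lemma~\ref{detect_trivial_lemma}.
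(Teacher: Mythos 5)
Your proof is correct and follows essentially the same route as the paper: translate the hypothesis through the preceding proposition to the underlying object $\res^G_\bbone K=K_0$ in $\G$, apply Lemma~\ref{detect_trivial_lemma}, and use that an object of $\G^G$ vanishes iff its underlying object does. In fact you make explicit a point the paper's two-line proof leaves implicit, namely that $K_0\in\widetilde\G$ (via Lemma~\ref{prop_gabriel_lemma}(1) for $N$ and the closure of the hereditary torsion class $\widetilde\G$ under coproducts and subobjects), which is exactly where the Noetherian hypothesis enters.
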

\begin{proof}
By the above proposition it is clear that $\Q^G_\alpha(\tor^G_{\alpha+1}(K))=0$, for all $\alpha$, if and only if $\Q_\alpha(\tor_{\alpha+1}(\res^G_\bbone K))=0$, for all $\alpha$. By Lemma \ref{detect_trivial_lemma}, this is equivalent to say that $\res^G_\bbone K=0$ in $\G$. Finally, note that $K=0$ in $\G^G$ if, and only if, $\res^G_\bbone K=0$ in $\G$.
\end{proof}

\section{Stable finiteness of endomorphism rings}\label{Sec_3}

\subsection{The semi-Artinian case}

This subsection contains the main application of Weiss' idea to representations of a sofic group $G$ on a Grothendieck category $\G$. The other results in this section will be proved by reducing to this case with a combination of suitable torsion and quotient functors.

\begin{proposition}\label{red_semi_art}
Let $\G$ be a Grothendieck category, $G$ a finitely generated sofic group, $X\in\G$ an object of finite length, and let ${}_GM:=\ext^G_\bbone X \in \G^G$. Then, $\End_{\G^G}({}_GM)$ is directly finite.
\end{proposition}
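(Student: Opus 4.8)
The plan is to translate the statement into the direct finiteness of a ``group ring'' $\End_\G(X)[G]$, and then to run Benjamin Weiss' transplantation argument, with the composition length of finite length objects standing in for linear dimension. First I would reduce. We may assume $X\neq0$. Since $X$ has finite length it is Noetherian, hence compact, so Proposition~\ref{explicit_G_covariance} identifies $\End_{\G^G}({}_GM)$ with the ring of column-finite $G$-equivariant matrices over $S:=\End_\G(X)$; equivariance forces such a matrix $(\phi_{g_2,g_1})$ to depend only on $g_1^{-1}g_2$, so $\End_{\G^G}({}_GM)\cong S[G]$ and it suffices to show this ring is directly finite. So I would fix $\phi,\psi\in\End_{\G^G}({}_GM)$ with $\phi\circ\psi=\id_M$ and aim for $\psi\circ\phi=\id_M$. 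Since $e:=\psi\circ\phi$ is idempotent, $M=\ker\phi\oplus\psi(M)$ with $\psi(M)\cong M$, and $\psi\circ\phi=\id_M$ if and only if $\ker\phi=0$; so I would argue by contradiction, assuming $K:=\ker\phi\neq0$. Fixing a finite symmetric generating set $B$ of $G$, I pick $r_0\in\N$ with $\supp\phi,\supp\psi\subseteq N_{r_0}(B)$ (viewing $\phi,\psi$ in $S[G]$). As $M=\ext^G_\bbone X$ is the directed union of its finite length subobjects $X^{(F)}$ ($F\subseteq G$ finite) and directed colimits are exact, some $C:=K\cap X^{(F)}$ is nonzero; enlarging $F$ I may assume $F\subseteq N_s(B)$ for a suitable $s\in\N$, so $C\leq X^{(N_s(B))}$ has $d:=\ell(C)\geq1$ and $\phi(C)=0$.

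Next I would invoke soficity and transplant everything to a finite graph. Set $r:=2(s+r_0)+1$ and fix $\varepsilon\in(0,1)$ with $\varepsilon<1/\bigl(2\,|N_{2s+1}(B)|\,\ell(X)\bigr)$. Soficity gives a finite $B$-labeled digraph $\Gamma=(V,E)$ with $V_0\subseteq V$ such that $|V_0|\geq(1-\varepsilon)|V|$ and the $r$-neighborhood of each $v\in V_0$ is (pointedly) isomorphic to $N_r(B)$. Since a pointed label-preserving isomorphism between balls of a Cayley graph is unique when it exists, these local isomorphisms are mutually coherent, and I use them to transplant $\phi$ and $\psi$ to endomorphisms $\tilde\phi,\tilde\psi$ of the finite length object $X^{(V)}=\bigoplus_{v\in V}X$ (copying, near each suitable vertex, the convolution pattern of $\phi$, resp.\ of $\psi$), and to transplant $C$ to a subobject $C_v\leq\bigoplus_{v'\in N_s(v)}X_{v'}$ with $\ell(C_v)=d$, for each $v\in V_0$. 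Because $r$ dominates all the radii involved, a direct check gives the two ``bulk'' identities: $(\tilde\phi\tilde\psi)_{v',v}=\delta_{v,v'}\,\id_X$ for all $v\in V_0$ and all $v'\in V$ (near such a $v$ the product only sees vertices with standard $r_0$-neighborhoods, so it reproduces $\phi\psi=\id$), and $\tilde\phi(C_v)=0$ for all $v\in V_0$ (near such a $v$, $\tilde\phi$ acts on $C_v$ the way $\phi$ acts on $C$).

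Finally I would run the length count. From the first bulk identity $\Im\tilde\phi\supseteq\Im(\tilde\phi\tilde\psi)\supseteq\bigoplus_{v\in V_0}X_v$, so $\mathrm{coker}\,\tilde\phi$ is a quotient of $X^{(V\setminus V_0)}$; since $X^{(V)}$ has finite length, $\ell(\ker\tilde\phi)=\ell(\mathrm{coker}\,\tilde\phi)$, hence
\[
\ell(\ker\tilde\phi)\leq\ell(X)\,|V\setminus V_0|\leq\varepsilon\,\ell(X)\,|V|.
\]
On the other hand, Weiss' Lemma~\ref{tech_weiss} applied to $\Gamma$ with $s$ in the role of ``$r_0$'' (legitimate: $V_0$ has the required $(2s+1)$-neighborhood property and $|V_0|\geq|V|/2$) yields $V_1\subseteq V_0$ with $|V_1|\geq|V|/\bigl(2\,|N_{2s+1}(B)|\bigr)$ and pairwise distances $\geq2s+1$; then the balls $N_s(v)$ ($v\in V_1$) are pairwise disjoint, so $\sum_{v\in V_1}C_v=\bigoplus_{v\in V_1}C_v$ is a subobject of $\ker\tilde\phi$ of length $d\,|V_1|\geq|V|/\bigl(2\,|N_{2s+1}(B)|\bigr)$. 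Comparing the two estimates (and using $|V|\geq1$) forces $1/\bigl(2\,|N_{2s+1}(B)|\bigr)\leq\varepsilon\,\ell(X)$, contradicting the choice of $\varepsilon$; hence $\ker\phi=0$, i.e.\ $\psi\circ\phi=\id_M$, and $\End_{\G^G}({}_GM)$ is directly finite.

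I expect the main obstacle to be the transplantation in the middle step: choosing $r$ large relative to $r_0$ and $s$, defining $\tilde\phi,\tilde\psi$ and the copies $C_v$ precisely, and checking the two bulk identities on the nose. This is routine in spirit --- uniqueness of pointed local isomorphisms of Cayley balls makes the local identifications automatically compatible --- but it is the technical heart and the place where the neighborhood radii and constants must be bookkept with care. By contrast the first and last steps are soft, using only compactness of $X$, exactness of directed colimits, additivity of length, and Weiss' combinatorial lemma.
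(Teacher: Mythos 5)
Your proposal is correct and takes essentially the same route as the paper: fix radii controlling the supports of $\phi,\psi$ and a ball where $\ker\phi$ is visible, transplant to a sofic approximation graph via the (automatically coherent) local ball isomorphisms, and contradict the choice of $\varepsilon$ using Lemma~\ref{tech_weiss} and a composition-length count. The only deviations are bookkeeping: you bound $\ell(\ker\tilde\phi)$ of a square transplant on $X^{(V)}$ (via transplanted kernel subobjects and $\ell(\ker)=\ell(\mathrm{coker})$) where the paper bounds $\ell(\Im\bar\phi)$ of a rectangular transplant losing $1$ per ball, and to legitimately take $V_1\subseteq V_0$ you should either rerun the greedy selection from the proof of Lemma~\ref{tech_weiss} inside $V_0$ or apply the lemma at radius $s+r_0$ with a correspondingly smaller $\varepsilon$ --- both harmless adjustments.
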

\begin{proof}
Let $\phi=(\phi_{g_2,g_1})_{g_1,g_2\in G}$ and $\psi=(\psi_{g_2,g_1})_{g_1,g_2\in G}\in \End_{\G^G}({}_GM)$ be two endomorphisms such that $\phi\circ\psi=\id_{M}$, and let us verify that $\phi$ is a monomorphism. 
 Indeed, we suppose, looking for a contradiction, that $\ker(\phi))\neq 0$, and we choose a sufficiently big $r_0\in \N$ and a sufficiently small $\varepsilon\in (0,1)$ in order to apply the definition of sofic group (Step 1 below) to obtain a statement about objects of finite length (Step 2), from which the contradiction can be seen (Step 3).


\smallskip\noindent
\underline{Step 1}. {\em The choice of $r_0$ and $\varepsilon$.} Consider the following finite subset of $G$:
\[
S:=\{1\}\cup\{g\in G:\phi_{g,1}\neq 0\}\cup\{g\in G:\psi_{g,1}\neq 0\}.
\]
We denote by $r_1$ the minimal positive integer such that $S\cdot S\subseteq N_{r_1}(B)$ (where $B$ is a fixed finite symmetric set of generators for $G$). For each subset $N\subseteq G$, identify $X^{(N)}$ with a sub-coproduct of $X^{(G)}$ and note that $X^{(G)}=\bigcup_{n\in\N} X^{N_n(B)}$ is an increasing direct union. Therefore, $\ker(\phi)=\bigcup_{n\in\N}(X^{N_n(B)}\cap \ker(\phi))$ and so there exists a minimal positive integer $r_2$ such that $X^{N_n(B)}\cap \ker(\phi)\neq 0$ for all $n\geq r_2$. We let
\begin{itemize}
\item $r_0:=\max\{r_1,r_2\}$;
\item and we choose $\varepsilon\in (0,1)$ such that $\varepsilon\lneq 1/(2\,\ell(X) |N_{2r_0+1}(B)|)$.
\end{itemize}
The reason for this specific choice of $\varepsilon$ will be clear at the end of the proof.
Note that, by the choice of $r_0$ and Proposition \ref{explicit_G_covariance}, we have the following relation:
\begin{equation}\label{comp=id}
\sum_{g\in N_{r_0}(B)}\phi_{gh,1} \psi_{1,g}=\sum_{g\in N_{r_0}(B)}\phi_{h,g^{-1}} \psi_{g^{-1},1}=\sum_{g\in G}\phi_{h,g} \psi_{g,1}=\begin{cases}
\id_X&\text{if $h=1$;}\\
0&\text{otherwise.}
\end{cases}
\end{equation}

\noindent
\underline{Step 2}. {\em Reduction to objects of finite length.} By definition of sofic group, there exists a finite $B$-labeled directed graph $\Gamma=(V,E)$, and a subset $V_0\subseteq V$ such that
\begin{itemize}
\item $|V_0|\geq (1-\varepsilon)|V|$;
\item there is an isomorphisms of labeled digraphs $N_{2r_0+1}(v)\cong N_{2r_0+1}(B)$, for all $v\in V_0$.
\end{itemize}
Consider the following two subsets of $V$:
\[
V':=\{v\in V:\text{ $N_{r_0}(v)\cong N_{r_0}(B)$}\}\qquad\text{and}\qquad V'':=\{v\in V':\text{ $N_{r_0}(v)\subseteq V'$}\}.
\]
Note that $V_0\subseteq V''\subseteq V'\subseteq V$. For each $v'\in V'$, let $\varepsilon_{v'}\colon N_{r_0}(v')\to N_{r_0}(B)$ 
be a fixed isomorphism of $B$-labeled digraphs and define two morphism
\[
\bar\phi:=(\bar\phi_{v,v'})_{V\times V'}\colon X^{(V')}\to X^{(V)}\qquad\text{and}\qquad\bar \psi:=(\bar\psi_{v',v''})_{V'\times V''}\colon X^{(V'')}\to X^{(V')},\quad\text{by}
\] 
\[
\bar\phi_{v,v'}:=\begin{cases}
\phi_{\varepsilon_{v'}(v),1}&\text{if $v\in N_{r_0}(v')$;}\\
0&\text{otherwise;}
\end{cases}
\quad\text{and}\quad
\bar\psi_{v',v''}:=\begin{cases}
\psi_{1,\varepsilon_{v'}(v'')}&\text{if $v''\in N_{r_0}(v')$;}\\
0&\text{otherwise.}
\end{cases}
\]
By \eqref{comp=id} and the definition of $\bar\phi$ and $\bar\psi$, we have that, given $v_1,\,v_2\in V''$, 
\begin{align*}
\sum_{v'\in V'}\bar\phi_{v_2,v'}\bar\psi_{v',v_1}&=\sum_{v'\in N_{r_0}(v_1)\cap N_{r_0}(v_2)}\phi_{\varepsilon_{v'}(v_2),1}\psi_{1,\varepsilon_{v'}(v_1)}=\begin{cases}\id_X&\text{if $v_1=v_2$;}\\
0&\text{otherwise.}\end{cases}
\end{align*}
In particular, $X^{(V_0)}\leq X^{(V'')}\leq \Im(\bar\phi)$.

\smallskip\noindent
\underline{Step 3}. {\em Where the contradiction comes.}
Taking the composition lengths, we obtain the following lower bound for $\ell(\Im(\bar \phi))$
$$(1-\varepsilon)|V|\ell(X)\leq |V_0|\ell (X)=\ell (X^{(V_0)})\leq \ell (\Im(\bar\phi))\, .$$
The contradiction will follow by showing that $\ell (\Im(\bar \phi))\lneq (1-\varepsilon)|V| \ell (X)$. Choose a subset $V_1\subseteq V''\subseteq V$ as in Lemma \ref{tech_weiss} (this is possible because of the choice of $\varepsilon\leq 1/2$ that implies that more than half of the elements of $V$ actually belong in $V_0$). By the definition of $\bar\phi$, for all $v\in V_1$, there is a commutative diagram of the form
$$
\xymatrix@R=20pt@C=45pt{
X^{(N_{r_0}(v))}\ar[r]^{\bar \phi}\ar[d]|-\cong& X^{(N_{2r_0}(v))}\ar[d]|-\cong\\
X^{(N_{r_0}(B))}\ar[r]^\phi& X^{(N_{2r_0}(B))}.
}$$
By the choice of $r_0$, the restriction of $\phi$ to $X^{(N_{r_0}(B))}$ is not a monomorphism, and so the restriction of $\bar\phi$ to $X^{(N_{r_0}(v))}$ cannot be monic for any $v\in V_1$. In particular,
\begin{equation}\label{-1}
\ell \left(\bar\phi\left(X^{(N_{r_0}(v))}\right)\right)\leq \ell \left(X^{(N_{r_0}(v))}\right)-1=|N_{r_0}\left(B\right)| \ell(X)-1.
\end{equation}
Let $N_{r_0}(V_1):=\bigcup_{v\in V_1}N_{r_0}(v)\subseteq V'$, so that $X^{(V')}\cong X^{(N_{r_0}(V_1))}\oplus X^{(V'\setminus N_{r_0}(V_1))}$. Thus,
\begin{align*}
\ell  \left( \Im \left( \bar\phi \right)  \right) &=\ell  \left( \bar\phi \left(X^{(V')}\right) \right) \leq \ell  \left( \bar\phi \left(X^{(N_{r_0}(V_1))}\right) \right)    +    \ell  \left( \bar\phi \left(X^{(V'\setminus N_{r_0}(V_1))}\right) \right)  \\
&\leq \sum_{v\in V_1}\ell \left( \bar\phi \left(X^{(N_{r_0}(v))}\right) \right) + \ell \left( X \right)  \left( |V'|-|V_1||N_{r_0} \left( B \right) | \right) \\
&\leq |V_1| \left( |N_{r_0} ( B ) |\ell(X ) -1 \right) +\ell (X)  \left( |V'|-|V_1||N_{r_0} \left( B \right) | \right) \\
&\leq \ell(X) |V|-|V|/ ( 2|N_{2r_0+1} ( B )|  ) \\
&\lneq |V| \ell (X)  \left( 1-\varepsilon \right),
\end{align*}
where the second line comes by the fact that $|V'\setminus N_{r_0}(V_1)|=|V'|-|V_1||N_{r_0}(B)|$, the third line is obtained applying \eqref{-1} and the fourth line follows by recalling that $|V'|\leq |V|$ and the estimate in Lemma \ref{tech_weiss}(1). The final strict inequality then comes from the choice of $\varepsilon$ and it is the contradiction we were looking for.
\end{proof}

\subsection{The general result for Grothendieck categories}

At this point, we know that, given a Grothendieck category $\G$, a finitely generated sofic group $G$, and an object $N\in \G$, the endomorphism ring of $\ext_\bbone^GN$ is directly finite as far as $N$ is of finite length. In the following corollary we show that, in fact, this last hypothesis can be weakened a lot, as it is enough to assume that $N$ is Noetherian.

\begin{corollary}\label{gen_thm_groth}
Let $\G$ be a Grothendieck category, $G$ a finitely generated sofic group, $N\in\G$ a Noetherian object and ${}_GM:=\ext^G_\bbone N$. Then, $\End_{\G^G}({}_GM)$ is directly finite.
\end{corollary}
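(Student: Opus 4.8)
The plan is to reduce the Noetherian case to the already-established finite-length case (Proposition~\ref{red_semi_art}) by using the Gabriel filtration of $\G$ and the machinery developed in the previous subsection, notably Corollary~\ref{coro_detect_0_action}. First I would set up the same situation as in the proof of Proposition~\ref{red_semi_art}: take $\phi,\psi\in\End_{\G^G}({}_GM)$ with $\phi\circ\psi=\id_M$, and aim to show that $\phi$ is a monomorphism (equivalently, that $K:=\ker(\phi)=0$), which then forces $\psi\circ\phi=\id_M$ as well since $\phi$ becomes an isomorphism. The key observation is that, by Corollary~\ref{coro_detect_0_action}, it suffices to prove $\Q^G_\alpha(\tor^G_{\alpha+1}(K))=0$ for every ordinal $\alpha$.

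Next I would fix an ordinal $\alpha$ and pass to the Grothendieck category $\bar\G:=\G_{\alpha+1}/\G_\alpha$, applying the composite functor $\Q_\alpha\circ\tor_{\alpha+1}$. By the Proposition on representations versus torsion and localization, this induces an exact-on-the-relevant-part functor $\G^G\to\bar\G^G$ carrying ${}_GM=\ext^G_\bbone N$ to $\ext^G_\bbone \bar N$, where $\bar N:=\Q_\alpha(\tor_{\alpha+1}(N))$; by Lemma~\ref{prop_gabriel_lemma}(4), $\bar N$ is an object of finite length in $\bar\G$. Moreover the identity $\phi\circ\psi=\id_M$ is preserved under any additive functor, so we obtain $\bar\phi\circ\bar\psi=\id$ in $\End_{\bar\G^G}(\ext^G_\bbone\bar N)$. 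Now Proposition~\ref{red_semi_art} applies verbatim to $\bar\G$, $G$, and $\bar N$: the endomorphism ring $\End_{\bar\G^G}(\ext^G_\bbone\bar N)$ is directly finite, hence $\bar\phi$ is an isomorphism and in particular a monomorphism. Since $\Q_\alpha\circ\tor_{\alpha+1}$ is (suitably) exact, the kernel of $\bar\phi$ is the image of $K$ under this functor, i.e.\ $\Q^G_\alpha(\tor^G_{\alpha+1}(K))=\ker(\bar\phi)=0$.

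Having shown $\Q^G_\alpha(\tor^G_{\alpha+1}(K))=0$ for all $\alpha$, Corollary~\ref{coro_detect_0_action} gives $K=0$, so $\phi$ is monic; combined with $\phi\circ\psi=\id_M$ this yields that $\phi$ is an isomorphism with two-sided inverse $\psi$, so $\psi\circ\phi=\id_M$ as well. This proves $\End_{\G^G}({}_GM)$ is directly finite.

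The main obstacle I anticipate is bookkeeping about exactness: the functor $\Q_\alpha\circ\tor_{\alpha+1}$ is not exact on all of $\G$ (neither the torsion radical nor the quotient preserves all monos/epis in the naive sense), so one has to be careful to argue at the level of the category $\G_{\alpha+1}/\G_\alpha$ — where the relevant functor, restricted appropriately, does behave well — and to check that ``taking the kernel of $\bar\phi$'' really does compute $\Q^G_\alpha(\tor^G_{\alpha+1}(\ker\phi))$ and not something larger. Concretely, one uses that $\tor_{\alpha+1}$ is left exact and that $\Q_\alpha$ is exact, so that the composite is left exact, which is exactly what is needed to see that kernels are sent to kernels; applying this to the short exact sequence $0\to K\to M\xrightarrow{\phi} M$ does the job. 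A secondary, more routine point is verifying that the functors on representation categories furnished by the earlier Proposition send $\ext^G_\bbone N$ to $\ext^G_\bbone$ of the image object with its Bernoulli action intact — but this is immediate from the explicit componentwise description of $\tor_{\tau^G}$ and $\Q_{\tau^G}$ given there.
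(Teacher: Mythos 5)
Your proposal is correct and follows essentially the same route as the paper: reduce via Corollary~\ref{coro_detect_0_action} and the Gabriel filtration to the semi-Artinian quotients $\G_{\alpha+1}/\G_\alpha$, use Lemma~\ref{prop_gabriel_lemma}(4) to get a finite-length $\bar N$, and apply Proposition~\ref{red_semi_art} to conclude $\ker(\bar\phi)=0$ for every $\alpha$. The only differences are cosmetic: the paper argues by contradiction with a single ordinal $\alpha$ where $\Q^G_\alpha(\tor^G_{\alpha+1}(K))\neq 0$, while you argue directly over all $\alpha$, and you make explicit the left-exactness of $\Q_\alpha\circ\tor_{\alpha+1}$ (which the paper leaves implicit when asserting ${}_G\bar K=\ker(\bar\phi)$).
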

\begin{proof}
Let $\phi$, $\psi\in\End_{\G^G}({}_GM)$ such that $\phi\circ \psi=\id_M$, 
let ${}_GK:=\ker(\phi)\in \G^G$ and suppose, looking for a contradiction, that $K\neq 0$. By Corollary~\ref{coro_detect_0_action}, there is an ordinal $\alpha$ such that $\bar K:=\Q^G_\alpha(\tor^G_{\alpha+1}(K))\neq 0$. Let now $\bar \G:=\G_{\alpha+1}/\G_\alpha$, that is semi-Artinian,  $\bar N:=\Q_{\alpha}(\tor_{\alpha+1}(N))\in \bar \G$, that is of finite length (being both Noetherian and semi-Artinian) and  $\bar M:=\Q_{\alpha}^G(\tor^G_{\alpha+1}(M))\cong \ext^G_{\bbone}\bar N\in \bar \G^G$. Consider also the two morphisms $\bar\phi:=\Q_{\alpha}^G(\tor^G_{\alpha+1}(\phi))$ and $\bar\psi:=\Q_{\alpha}^G(\tor^G_{\alpha+1}(\psi))\in \End_{\bar \G^G}({}_G\bar M)$ and note that ${}_G\bar K=\ker(\bar\phi)$ in $\bar \G$. Since $\bar \G$ is semi-Artinian, Proposition \ref{red_semi_art} tell us that $\bar K=0$, which is a contradiction.
\end{proof}

In the above corollary we have weakened the hypotheses on the base object $N$. In the following corollary we show that also the hypotheses on the acting group $G$ can be weakened, in fact, we can take $G$ to be an arbitrary sofic group.

\begin{corollary}\label{gen_thm_groth2}
Let $\G$ be a Grothendieck category, $G$ an arbitrary sofic group (not necessarily finitely generated), $N\in\G$ a Noetherian object and ${}_GM:=\ext^G_\bbone N$. Then, $\End_{\G^G}({}_GM)$ is directly finite.
\end{corollary}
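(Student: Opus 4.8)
The plan is to reduce to Corollary \ref{gen_thm_groth}, which already handles finitely generated sofic groups, by showing that any two endomorphisms entering a one-sided inverse relation actually come from a finitely generated subgroup of $G$. Fix $\phi,\psi\in\End_{\G^G}({}_GM)$ with $\phi\circ\psi=\id_M$; we must prove $\psi\circ\phi=\id_M$. As $N$ is Noetherian it is compact, so by the discussion preceding Proposition \ref{explicit_G_covariance} both $\phi$ and $\psi$ are column-finite matrices over $\End_\G(N)$ and, by Proposition \ref{explicit_G_covariance}, each is determined by its first column, which has finite support. Following Step~1 of the proof of Proposition \ref{red_semi_art}, put
\[
S:=\{1\}\cup\{g\in G:\phi_{g,1}\neq 0\}\cup\{g\in G:\psi_{g,1}\neq 0\},
\]
a finite subset of $G$, and let $H:=\langle S\rangle\leq G$. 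Then $H$ is finitely generated, and it is sofic because it is a finitely generated subgroup of the sofic group $G$.

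Next I would introduce the subset
\[
R':=\{\xi\in\End_{\G^G}({}_GM):\xi_{g,1}=0\text{ for all }g\in G\setminus H\},
\]
and check that it is a unital subring of $\End_{\G^G}({}_GM)$. Indeed, it contains $\id_M$ and is obviously closed under addition, while for $\xi,\zeta\in R'$ the matrix-composition formula $(\xi\circ\zeta)_{g,1}=\sum_{h\in G}\xi_{h^{-1}g,1}\,\zeta_{h,1}$ coming from Proposition \ref{explicit_G_covariance} has only summands with $h\in H$ and $h^{-1}g\in H$, hence vanishes unless $g\in H\cdot H=H$. I would then note that every $\xi\in R'$ maps the subobject $N^{(H)}\leq M$ spanned by the copies of $N$ indexed by $H$ into itself; since $N^{(H)}$ is stable under the Bernoulli action of $H\leq G$ and, with this action, is exactly $\ext^H_\bbone N$, restriction $\xi\mapsto\xi|_{N^{(H)}}$ yields a ring homomorphism $R'\to\End_{\G^H}(\ext^H_\bbone N)$. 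Comparing first columns on both sides (each endomorphism being determined by its first column, by Proposition \ref{explicit_G_covariance}) shows this homomorphism is bijective, so $R'\cong\End_{\G^H}(\ext^H_\bbone N)$.

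Since $H$ is a finitely generated sofic group and $N$ is Noetherian, Corollary \ref{gen_thm_groth} tells us that $\End_{\G^H}(\ext^H_\bbone N)$ is directly finite, hence so is $R'$. Finally, $\phi$, $\psi$ and $\id_M$ all lie in $R'$, since their first columns are supported in $S\subseteq H$. Therefore, working inside $R'$, the relation $\phi\circ\psi=\id_M=1_{R'}$ forces $\psi\circ\phi=1_{R'}=\id_M$ by direct finiteness of $R'$. As $\phi,\psi$ were arbitrary, $\End_{\G^G}({}_GM)$ is directly finite.

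The one genuinely technical step — and the main obstacle — is the verification that $R'$ is a unital subring and that restriction to $N^{(H)}$ is a ring isomorphism onto $\End_{\G^H}(\ext^H_\bbone N)$; this amounts to routine bookkeeping with the equivariance and matrix-composition identities of Proposition \ref{explicit_G_covariance}, using that $H\cdot H=H$ and that compactness of $N$ makes the relevant matrices column-finite with finitely supported first columns. Everything else is an immediate appeal to Corollary \ref{gen_thm_groth} and to the definition of a sofic group.
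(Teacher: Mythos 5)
Your proof is correct and follows essentially the same route as the paper: both pass to the finitely generated sofic subgroup $H$ generated by the supports of the first columns of $\phi$ and $\psi$, use that $N^{(H)}$ is invariant and, with the restricted Bernoulli action, equals $\ext^H_\bbone N$ in $\G^H$, and then invoke Corollary \ref{gen_thm_groth}. The only difference is the final transfer step: the paper adds to $S$ a finite set $F$ with $\ker(\phi)\cap N^{(F)}\neq 0$ and derives a contradiction from a nonzero kernel, whereas you pull direct finiteness back along the restriction homomorphism $R'\to\End_{\G^H}(\ext^H_\bbone N)$, for which injectivity (immediate, since an equivariant endomorphism is determined by its first column and $1\in H$) already suffices, so your bijectivity claim is more than is needed.
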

\begin{proof}
Suppose that we have two maps $\phi,\, \psi\in \End_{\G^G}({}_GM)$ such that $\phi\circ\psi=\id_M$. We suppose, looking for a contradiction, that $\ker(\phi)\neq 0$. Since $\G$ is a Grothendieck category, $\ker(\phi)=\sum \{\ker(\phi)\cap N^{(F)}:F\subseteq G\text{ finite}\}$, so we can find a finite subset $F\subseteq G$ such that $\ker(\phi)\cap N^{(F)}\neq 0$. Consider the following finite subset of $G$:
\[
S:=F \cup\{g\in G:\phi_{g,1}\neq 0\}\cup\{g\in G:\psi_{g,1}\neq 0\},
\]
and let $H\leq G$ be the subgroup generated by $S$, which is clearly finitely generated and sofic. We claim that $N^{(H)}$ is both a $\phi$- and a $\psi$-invariant subobject of $M$; to see this, consider $h\in H$, $g\in G$, and suppose that $\phi_{g,h}\neq 0$. Since $\phi$ is $G$-equivariant, $\phi_{g,h}=\phi_{h^{-1}g,1}$, so $h^{-1}g\in S\subseteq H$ that, together with the fact that $h\in H$, implies that $g=h(h^{-1}g)\in H$; the argument for $\psi$ is completely analogous. We can then consider $M':=\ext_{\bbone}^HN$ and the two maps $\phi':=\phi_{\restriction M'}$ and $\psi':=\psi_{\restriction M'}$, that are clearly $H$-equivariant, that is, they are morphisms in $\G^H$. Finally, note that $\phi'\circ\psi'=\id_{M'}$ and $0\neq \ker(\phi)\cap N^{(F)}\leq\ker(\phi')$, and this is absurd by Corollary \ref{gen_thm_groth}.
%
\end{proof}

Finally, let us show that the above statements about direct finiteness can be easily reformulated in terms of stable finiteness.

\begin{corollary}\label{gen_thm_groth3}
Let $\G$ be a Grothendieck category, $G$ a sofic group, $N\in\G$ a Noetherian object and ${}_GM:=\ext^G_\bbone N$. Then, $R:=\End_{\G^G}({}_GM)$ is stably finite.
\end{corollary}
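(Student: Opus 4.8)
The plan is to deduce stable finiteness of $R$ from the direct finiteness statement already established in Corollary~\ref{gen_thm_groth2}, applied not to $N$ itself but to the finite coproducts $N^{(k)}$. By the definition of stable finiteness recalled in the introduction, it suffices to show that $\Mat_k(R)$ is directly finite for every $k\in\N_{\geq1}$.

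First I would invoke the standard ring isomorphism expressing endomorphisms of a finite coproduct as a matrix ring: for any object $Y$ of an additive category and any $k\in\N_{\geq1}$, writing a morphism $Y^{(k)}\to Y^{(k)}$ as the $k\times k$ array of its components $Y\to Y$ (composition corresponding to matrix multiplication) yields a ring isomorphism $\End(Y^{(k)})\cong\Mat_k(\End(Y))$. Taking $Y={}_GM$ in $\G^G$ gives $\Mat_k(R)\cong\End_{\G^G}\bigl({}_GM^{(k)}\bigr)$. On the other hand, since $\ext^G_\bbone$ is a left adjoint (to $\res^G_\bbone$) it preserves coproducts, so ${}_GM^{(k)}=(\ext^G_\bbone N)^{(k)}\cong\ext^G_\bbone\bigl(N^{(k)}\bigr)$. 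Hence $\Mat_k(R)\cong\End_{\G^G}\bigl(\ext^G_\bbone(N^{(k)})\bigr)$.

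It then remains only to observe that $N^{(k)}$ is again a Noetherian object of $\G$. This follows by induction on $k$ from the short exact sequences $0\to N^{(k-1)}\to N^{(k)}\to N\to 0$, using the general fact that in a Grothendieck category an object is Noetherian as soon as a subobject and the corresponding quotient are (an ascending chain of subobjects of $N^{(k)}$ is determined, up to finitely many steps, by its images in $N^{(k-1)}$ and in $N$). Therefore Corollary~\ref{gen_thm_groth2}, applied with $N$ replaced by the Noetherian object $N^{(k)}$, shows that $\End_{\G^G}\bigl(\ext^G_\bbone(N^{(k)})\bigr)$ is directly finite, and by the isomorphism above so is $\Mat_k(R)$. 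As $k$ was arbitrary, $R$ is stably finite.

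There is essentially no obstacle here: all the combinatorial and localization-theoretic content has already been spent in proving Proposition~\ref{red_semi_art} and Corollaries~\ref{gen_thm_groth}--\ref{gen_thm_groth2}, and the present step is the purely formal remark that passing to $k\times k$ matrix rings on the algebraic side corresponds to replacing $N$ by $N^{(k)}$ on the categorical side, an operation which keeps us inside the class of Noetherian objects. The only points requiring (routine) verification are the matrix-ring isomorphism and the stability of Noetherianity under finite coproducts, both of which are standard; one should also note that direct finiteness is insensitive to passing to the opposite ring, so any left/right convention in the matrix-ring identification is immaterial.
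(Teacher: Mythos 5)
Your proposal is correct and is essentially the paper's own argument: identify $\Mat_k(R)$ with $\End_{\G^G}(\ext^G_\bbone(N^{(k)}))$ (using that $\ext^G_\bbone$ preserves coproducts), note that $N^{(k)}$ is again Noetherian, and apply Corollary~\ref{gen_thm_groth2}. The extra details you supply (the matrix-ring isomorphism and Noetherianity of finite coproducts) are exactly the routine verifications the paper leaves implicit.
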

\begin{proof}
Let $k\in \N_{>0}$ and consider the following isomorphism: 
\[
\Mat_k(R)\cong \End_{\G^G}(\ext_\bbone^G(N^{(k)})),
\] 
where $N^k$ is still a Noetherian object. Hence, to verify that $R$ is stably finite, that is, that $\Mat_k(R)$ is directly finite for each $k\in\N_{>0}$, it is enough to verify that $\End_{\G^G}(\ext_\bbone^GX)$ is directly finite for each Noetherian object $X\in \G$, and we already know this by Corollary \ref{gen_thm_groth2}.
\end{proof}

To conclude, let us  reformulate the above results in the special case when the category $\G=R\text{-}\mathrm{Mod}$ is the category of left $R$-modules for some ring $R$.

\begin{corollary}\label{gen_thm_groth4}
Let $R$ be a ring, $G$ a sofic group and ${}_RN$ a Noetherian left $R$-module. Then, $\End_{R[G]}(R[G]\otimes_RN)$ is a stably finite ring.
In particular, if $R$ is a left Noetherian ring, then $R[G]\cong\End_{R[G]}(R[G]\otimes_R R)$ is stably finite.
\end{corollary}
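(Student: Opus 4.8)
The plan is to obtain Corollary~\ref{gen_thm_groth4} as a direct specialization of Corollary~\ref{gen_thm_groth3} to the Grothendieck category $\G=\lmod R$. First I would recall, from the example computing $\lmod{R[G]}\cong(\lmod R)^G$ in Section~2, that this equivalence identifies the extension functor $\ext^G_\bbone\colon\lmod R\to(\lmod R)^G$ with ${}_{R[G]}R[G]_R\otimes_R-\colon\lmod R\to\lmod{R[G]}$. I would also note that a left $R$-module is a Noetherian object of the Grothendieck category $\lmod R$ precisely when it is Noetherian in the usual module-theoretic sense, since by definition being a Noetherian object means that the lattice of subobjects --- here, the lattice of submodules --- satisfies the ascending chain condition.

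Next, given a Noetherian left $R$-module ${}_RN$, the object $\ext^G_\bbone N\in(\lmod R)^G$ corresponds under the equivalence to $R[G]\otimes_R N\in\lmod{R[G]}$, so one obtains a ring isomorphism $\End_{(\lmod R)^G}(\ext^G_\bbone N)\cong\End_{R[G]}(R[G]\otimes_R N)$. Since $G$ is sofic and $N$ is Noetherian, Corollary~\ref{gen_thm_groth3} says that $\End_{(\lmod R)^G}(\ext^G_\bbone N)$ is stably finite; transporting the property along the isomorphism proves the first assertion.

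For the final ``in particular'' clause, assume $R$ is left Noetherian, so that ${}_RR$ is a Noetherian left $R$-module and $R[G]\otimes_R R\cong R[G]$ as left $R[G]$-modules. Evaluation at $1$ gives $\End_{R[G]}({}_{R[G]}R[G])\cong R[G]^{\mathrm{op}}$, and a ring $S$ is directly finite if and only if $S^{\mathrm{op}}$ is (the condition $xy=1\Rightarrow yx=1$ is symmetric in $x$ and $y$), while $\Mat_k(S^{\mathrm{op}})\cong\Mat_k(S)^{\mathrm{op}}$; hence $S$ is stably finite if and only if $S^{\mathrm{op}}$ is. Applying the first assertion with $N={}_RR$ and invoking this symmetry yields that $R[G]$ is stably finite.

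The main point here is that there is no main point: once Corollary~\ref{gen_thm_groth3} has been established, the proof is just an unwinding of the equivalence $\lmod{R[G]}\cong(\lmod R)^G$ from Section~2. The only two places that genuinely need a sentence of justification are the compatibility of that equivalence with the extension functor, so that $\ext^G_\bbone N$ becomes $R[G]\otimes_R N$, and the left--right symmetry used to pass from $\End_{R[G]}({}_{R[G]}R[G])$ to $R[G]$ itself in the last step.
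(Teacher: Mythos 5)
Your proof is correct and is essentially the argument the paper intends (the paper leaves this corollary as an immediate specialization of Corollary~\ref{gen_thm_groth3} via the equivalence $\lmod{R[G]}\cong(\lmod R)^G$ and the identification of $\ext^G_\bbone$ with $R[G]\otimes_R-$). Your extra care in the last step --- passing through $\End_{R[G]}({}_{R[G]}R[G])\cong R[G]^{\mathrm{op}}$ and noting that stable finiteness is invariant under taking opposite rings --- is a harmless (indeed welcome) refinement of the paper's bare assertion that $R[G]\cong\End_{R[G]}(R[G]\otimes_RR)$.
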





\bibliographystyle{emsplain}

\end{document}